\documentclass[10pt]{article}
\textheight 21.8cm \textwidth   15.5cm
\topmargin  -1.5cm \oddsidemargin 0.3cm  \evensidemargin -0.3cm
\pagestyle{plain}



\usepackage{hyperref}
\hypersetup{hypertex=red,
	colorlinks=true,
	linkcolor=red,
	anchorcolor=red,
	citecolor=red}

\usepackage{psfrag}
\usepackage{epstopdf}
\usepackage{epsfig}
\usepackage{verbatim}
\usepackage{fancyhdr}
\usepackage{subfigure}
\usepackage{indentfirst}
\usepackage{booktabs}
\usepackage{color}
\usepackage{mathrsfs}
\usepackage{graphics}
\usepackage{graphicx}
\usepackage{listings}
\usepackage{xcolor}
\usepackage{float}
\usepackage{multirow}
 
\usepackage{amsmath}
\usepackage{amssymb}
\usepackage{amscd}
\usepackage{amsthm}

\usepackage[all]{xy}

\usepackage{algorithm}  
\usepackage{algorithmic}  
\usepackage{enumerate}

\usepackage{xypic}  

\usepackage{My_macro_unified}
\usepackage{My_macro_special}


\begin{document}
	
\title{\textbf{Auxiliary iterative schemes for the discrete operators on de Rham complex}}

\author{ Zhongjie Lu 
	\thanks{School of Mathematical Sciences, 
		University of Science and Technology of China, Hefei, Anhui 230026, China.
	Email: zhjlu@ustc.edu.cn. Research supported by NSFC grant No. 12101586.}}

\date{}	
	
	\maketitle
	
	\begin{abstract}
					
	The main difficulty in solving the discrete source 
	and eigenvalue problems of the operator $ d^*d $
	with iterative methods
	is to deal with the huge kernels,
	for example,	
	the $ \nabla \times \nabla \times $ 
	and $- \nabla  \left( \nabla \cdot\right) $ operator.
	In this paper, 
	we construct auxiliary schemes
	for their discrete problems
	based on Hodge Laplacian on de Rham complex.
	The spectra of the auxiliary schemes are Laplace-like.
	Then many efficient iterative methods 
	and preconditioning techniques
	can be applied to them.
	After getting the solutions of the auxiliary schemes,
	the desired solutions of the original problems can be
	recovered or identified through some simple operations.
	We also investigate two preconditioners for the auxiliary schemes,
	ILU-type method and Multigrid method.
	Finally,
	we present some numerical experiments
	to verify the efficiency of the auxiliary schemes.

	\textbf{Keywords: de Rham complex, Maxwell equation, iterative method, multigrid method }

	\end{abstract}

   
   \section{Introduction}
   
   In this paper,
   we consider the algebraic systems involving the operator $ d^*d $.
   Here, $ d $ is the differential operator on de Rham complex
   \begin{equation}\label{deRham_complex}
   	\begin{split}
   		\xymatrix{
   			0 \ar[r] & V^0 \ar[r]^{d^0} 
   			& V^1 \ar[r]^{d^1} 
   			& \cdots \ar[r]^{d^{n-1}} 
   			& V^n \ar[r]& 0
   		}
   	\end{split}
   \end{equation}
   and $ d^* $ is its adjoint operator.
   The main difficulty in solving such problems with iterative methods
   is to deal with the kernel of $ d^*d $.
   We take the $ \mbR^3 $ complex 
   \begin{equation}\label{R3_complex}
   	\begin{split}
   		\xymatrix{
   			0 \ar[r] & H(\textgrad) \ar[r]^{\nabla} 
   			& \vH(\textcurl) \ar[r]^{\nabla \times} 
   			& \vH(\textdive) \ar[r]^{\nabla \cdot} 
   			& L^2 \ar[r]& 0
   		}
   	\end{split}
   \end{equation}
   as an example
   to illustrate the problems that we want to deal with.

   The $ 0 $-form $ d^*d $ operator on the complex \eqref{R3_complex}
   is the scalar Laplace operator $ -\Delta $,
   the $ 1 $-form is the Maxwell operator $ \nabla \times \nabla \times $
   and the $ 2 $-form is the grad-div operator $ -\nabla(\nabla \cdot ) $.
   When a new iterative method or preconditioning technique is proposed,
   the Laplace operator is always a standard test model.
   The high efficiency in solving the discrete Laplace problems
   is almost the basic requirement for
   a good iterative technique.
   There are plenty of iterative techniques satisfying such requirement 
   \cite{greenbaum1997iterative, MR1990645}.
   The main difference between the scalar Laplace operator
   and the following two operators is 
   the dimensions of their kernels.
   The kernel of the scalar Laplace operator 
   is usually caused by the shape of the domains
   or its boundary conditions,
   and its dimension is a limited number.
   From the complex \eqref{R3_complex},
   we can find that
   the kernel of the Maxwell operator contains 
   the space $ \nabla H(\textgrad) $
   and the kernel of the grad-div operator contains
   the space $\nabla \times H(\textcurl) $.
   They are both infinite dimensional.
   When discretizing the two operators,
   there exist huge kernels in their algebraic systems,
   especially for large-scale problems.
   These kernels cause essential difficulties 
   in solving large algebraic systems
   with iterative methods.
   
   One of the two main features of the multigrid method
   is error correction on a coarse grid \cite{MR2373954}.
   We discretize the Maxwell equation 
   \begin{equation}\label{Maxwell}
   	\begin{split}
   		\nabla \times \nabla \times \vu + c\vu = \vf
   	\end{split}
   \end{equation}
   using the first order edge finite element
   in a cubic domain $ [0,1]^3 $,
   and use a  $ 100\times 100 \times 100 $ uniformly cubic mesh.
   Then the total degree of freedoms of the discrete problem of \eqref{Maxwell}
   is the number of the edges in this mesh,
   which is roughly $ 3\times 100^3 = 3,000,000 $.
   The gradient of the nodal element space is contained 
   in the kernel of the Maxwell operator $ \nabla\times \nabla \times $,
   the dimension of which is about $ 100^3 = 1,000,000 $.
   The frequencies of the modes in the $ 1,000,000 $-dimensional kernel are all $ c $
   in the discrete system of \eqref{Maxwell}.
   A nature coarse mesh is $ 50\times 50 \times 50 $.
   The degree of freedoms of this 'coarse' problem
   is about $ 3\times 50^3 = 375,000 $.
   This is even small than the dimension 
   of the kernel (the low-frequency modes) in the 'fine' problem.
   We could not expect that 
   this 'coarse' problem can correct the errors
   of the low-frequency modes with much larger dimension.
   The other feature of the multigrid method
   is smoothing on the current grid.
   In fact, many smoothers that are efficient for the Laplace-like problems
   do not work for the discrete Maxwell problems.
   
   In the Maxwell eigenvalue problem
   \begin{equation}\label{Maxwell_eig}
   	\begin{split}
   		\nabla \times \nabla \times \vu  = \lambda \vu,
   	\end{split}
   \end{equation}
   the meaningful eigenpairs are usually two parts in practical applications.
   The first part are nonzero eigenvalues, 
   especially some of the smallest nonzero eigenvalues.
   The smallest eigenvalue of the discrete system 
   of the Maxwell eigenproblem \eqref{Maxwell_eig}
   is $ 0 $ with the same dimension of the huge discrete kernel.
   However, many iterative algorithms for matrix eigenvalue problems
   are efficient in finding the smallest or largest  
   eigenvalues of a system \cite{template_eigen}.
   The large number of the zero eigenvalues 
   are the obstacle in front of the smallest nonzero eigenvalues.
   The second part of the meaningful eigenpairs 
   are the solenoidal and irrotational (harmonic) functions,
   whose corresponding eigenvalues are also $ 0 $.
   Then how to separate them from the large number of the eigenpairs 
   with $ 0 $ eigenvalue is a problem.
   Spectral inverse and shift method
   or adding a Lagrange multiple
   can be a choice in dealing with this problem.
   The two methods involve solving linear systems.
   If the size of the discrete problem is not very large,
   the system can be solved by  direct  solver.
   When the size is large,
   the systems have to be solved by iterative method.
   Then it comes back to the problems in using
   iterative solvers for the systems with huge kernels.

   The efficient algorithms in solving the discrete systems 
   involving the Maxwell operator and grad-div operator
   have been studied by many researchers.
   It is still an on-going topic.
   \cite{hiptmair1998multigrid} proposed a multigrid method for Maxwell equation
   by treating the kernel and its orthogonal complement separately.
   \cite{arnold2000multigrid} constructed proper Schwarz smoothers for 
   multigrid in $ H(\textdive) $ and $ H(\textcurl) $. 
   \cite{hiptmair2007nodal} proposed nodal auxiliary space preconditioners
   for the two spaces based on regular decompositions.
   \cite{hiptmair2002multilevel} constructed multilevel method for
   Maxwell and grad-div eigenvalue problems.
   To the author's knowledge,
   the review of the researches on this problem
   is far from complete.

   The approximation for $ d^*d $ problems is also a very important topic.
   The discrete operator considered in this paper is base on 
   these studies of approximation.
   We only cite some general results of them here.
   For details, 
   we refer the readers to 
   \cite{arnold2006finite, 
   	boffi2010finite, 
   	boffi2013mixed,
   	boffi2000problem,
   	boffi1997convergence,
   	MR2009375,
   	Kikuchi1987,
   	Kikuchi1989} 
   and the references therein.

   In this paper,
   we construct   auxiliary schemes
   for the  discrete $ d^*d $ problems using Hodge Laplacian.
   The auxiliary part complements the kernel of the discrete $ d^*d $ operators.
   Then the spectrum of the auxiliary schemes become Laplace-like.
   The auxiliary schemes for source and eigenvalue problems 
   can be both computed in the same way as Laplace problems.
   Many efficient iterative methods and preconditioning techniques
   that are efficient for Laplace problems
   can be also applied to these auxiliary schemes 
   with simple modifications.
   After obtaining the solutions of the auxiliary schemes,
   the desired solutions can be recovered or identified
   through simple operations.
   We also consider an ILU-type method and multigrid method
   to solve the auxiliary schemes.
   Finial,
   we take the Maxwell and grad-div source and eigenvalue problems
   to verify the auxiliary schemes.
   
   The auxiliary schemes proposed in this paper contain two steps:
   \begin{itemize}
   	\item Add an auxiliary term to the original problems
   	and solve the auxiliary problems.
   	\item Recover or identify the desired solutions 
   	of the original problems 
   	from the solutions of the auxiliary problems.
   \end{itemize}
   In the first step, 
   we derive the auxiliary problems 
   from the discrete Hodge Laplacian 
   in finite element exterior calculus (FEEC).
   In fact,
   the auxiliary problems have the similar matrix form
   as the penalty methods,
   the discrete regularization
   and the grad-div stabilisation methods.
   The crucial point is the second step.
   Since the matrix forms contain an extra term,
   their solutions are no longer the solutions 
   of the original problems.
   As the auxiliary term is constructed 
   according to FEEC,
   it contains the similar property
   to $ d^{k+1}\circ d^k = 0 $ on complex.
   Using this property on matrix level,
   the desired solutions 
   of the original problems 
   can be easily recovered or identified 
   from the solutions of the auxiliary problems.
   Beside the Laplace-like problem,
   it is enough (at most) to solve
   a mass equation,
   which usually has quite good condition
   and is easily to be solved using iterative methods.
   As the auxiliary schemes are constructed in an abstract setting,
   except for the problems related to the operators
   $ \nabla \times \nabla \times $ and $- \nabla  \left( \nabla \cdot\right) $
   on $ \mbR^3 $-complex,
   other similar problems on de Rham complex
   can be solved using the same schemes.

   This paper is organized as follows.
   In Section \ref{sec_abstract}, we study the approximation of the abstract Hodge Laplacian on de Rham complex.
   In Section \ref{sec_analysis}, we deduce some properties of discrete operators.
   In Section \ref{sec_auxiliarye}, we use the discrete Hodge Laplacian to construct auxiliary schemes 
   for the discrete source and eigenvalue problems of $ d^*d $ operator.
   In Section \ref{matrix_aux}, we study the corresponding the matrix form of the auxiliary schemes 
   and simplify them.
   In Section \ref{sec_precond}, we investigate two iterative methods, ILU-type preconditioning and
   multigrid method, to solve the auxiliary schemes.
   In Section \ref{sec_numerical}, we take the source and eigenvalue of 
   $ \nabla \times \nabla \times $ and $- \nabla  \left( \nabla \cdot\right) $ operators
   as examples to verify the efficiency of the auxiliary schemes. 
   In Section \ref{sec_conclusion}, there are some conclusions.


\section{The abstract Hodge Laplacian and its approximation}\label{sec_abstract}

The $ d^*d $ operator can be viewed 
as a half of the Hodge Laplacian on de Rham complex.
In this section,
we study the 'full' Hodge Laplacian problems first.
The analysis is in the framework of the Hilbert complex.
The de Rham complex is a typical example of the Hilbert complex
when the operators are differential operators
and the spaces are the corresponding function spaces.
The theoretical framework in this section has been 
established in \cite{arnold2018finite, arnold2006finite, arnold2010finite}.
We refer the readers to these references 
for more details.

\subsection{The abstract Hodge Laplacian}
Let us consider a Hilbert complex $ (W,d) $.
The $ d^k $ is a closed densely defined operator
from $ W^k $ to $ W^{k+1} $ 
and its domain is denoted by $ V^k $.
The corresponding  domain complex is
\begin{equation}\label{Hcomplex}
	\begin{split}
		\xymatrix{
			0 \ar[r] & V^0 \ar[r]^{d^0} 
			& V^1 \ar[r]^{d^1} 
			& \cdots \ar[r]^{d^{n-1}} 
			& V^n \ar[r]& 0.
		}
	\end{split}
\end{equation}
The adjoint operator of $ d^k $ is denoted by
$ (d^k)^*: W^{k+1} \to W^{k} $
and is defined as
\begin{equation}\nonumber 
	\begin{split}
		\left\langle  (d^k)^*\vu \, ,\, \vv \right\rangle 
		=\left\langle \vu \, ,\,   d^k\vv \right\rangle,
	\end{split}
\end{equation}
if $ \vu \in V^{k} $ or $ \vv \in W^{k+1} $
vanishes near the boundary.
Its domain is a dense subset of $ W^{k+1} $
and denoted by $ V^*_{k+1} $.
Then we have the dual complex
\begin{equation}\label{Hcomplex_dual}
	\begin{split}
		\xymatrix{
			0 \ar[r] & V^*_n \ar[r]^{(d^{n-1})^*} 
			& V^*_{n-1} \ar[r]^{(d^{n-2})^*} 
			& \cdots \ar[r]^{(d^{0})^*} 
			& V^*_0 \ar[r]& 0.
		}
	\end{split}
\end{equation}
The range and the null spaces of the differential operators
are denotes by
\begin{equation}\nonumber 
	\begin{split}
		\mfB^k = d^{k-1}V^{k-1},
		\qq \mfZ^k = \mcN(d^k),
		\qq \mfB^*_k = (d^{k})^* V^*_{k+1},
		\qq \mfZ^*_k = \mcN((d^{k-1})^*).
	\end{split}
\end{equation}
The cohomology space is denoted by $ \mcH^k = \mfZ^k/\mfB^k $
and the space of harmonic $ k $-forms is denoted by
$ \mfH^k = \mfZ^k \cap \mfZ^*_k $.

In this paper,
we focus on the Hilbert complex with \emph{compactness property},
i.e. the inclusion $ V^k\cap V^*_k \subset W^k $ is compact for each $ k $.
In this case, 
the Hilbert complex is closed and Fredholm \cite[Theorem 4.4]{arnold2018finite}.
Then we have 
$$ \mcH^k \cong \mfH^k, $$
and their dimensions are finite.
There is the following Hodge decomposition \cite[Theorem 4.5]{arnold2018finite}
\begin{equation}\nonumber 
	\begin{split}
		V^k = \mfB^k \oplus \mfH^k \oplus \mfZ^{k\bot_V}.
	\end{split}
\end{equation}
Here, $ \mfZ^{k\bot_V} = \mfB^*_k \cap V^k $.

The $ k $-form Hodge Laplacian operator is denoted by
\begin{equation}\label{Hodge_strong}
	\begin{split}
		L^k = (d^k)^* d^k  + d^{k-1} (d^{k-1})^*
	\end{split}
\end{equation}
and its domain is
\begin{equation}\nonumber 
	\begin{split}
		D(L^k) = \left\lbrace \vu \in V^k \cap V_k^* \,|\, 
		d^k \vu \in V^*_{k+1} \; \text{ and } \; (d^{k-1})^* \vu \in V^{k-1} \right\rbrace.
	\end{split}
\end{equation}
For a positive number $ c>0 $, 
we consider the  problem:
given $ \vf \in W^k $, find $ \vu \in D(L^k) $ such that
\begin{equation}\label{Hodge_operator}
	\begin{split}
		(d^k)^* d^k \vu + d^{k-1} (d^{k-1})^* \vu + c \vu = \vf.
	\end{split}
\end{equation}
This problem can be written in a mixed weak formulation:
given $ \vf \in W^k $, 
find  $ (\vsigma,\vu)\in V^{k-1}\times V^{k}$
such that
\begin{equation}\label{Hodge_mixed}
	\begin{split}
		\left\langle \vsigma\, ,\, \vtau \right\rangle 
		- \left\langle \vu \, ,\,  d^{k-1}\vtau \right\rangle 
		&=0 \qqq\qq\; \vtau \in V^{k-1},\\
		\left\langle d^k \vu \, ,\,  d^k \vv \right\rangle 
		+ \left\langle d^{k-1}\vsigma \, ,\,  \vv \right\rangle 
		+ c\left\langle \vu \, ,\,  \vv \right\rangle 
		&= \left\langle \vf \, ,\,  \vv \right\rangle \qq \vv \in V^k.		%
	\end{split}
\end{equation}
We also consider the Hodge Laplacian eigenvalue problem:
find $ (\lambda,\vu) \in \mbR \times V^k $ such that
\begin{equation}\label{Hodge_eig_strong}
	\begin{split}
		(d^k)^* d^k \vu + d^{k-1} (d^{k-1})^* \vu  = \lambda \vu.
	\end{split}
\end{equation}
Its mixed formulation is:
find $ (\lambda,\vu) \in \mbR \times V^k $ such that
\begin{equation}\label{Hodge_eig}
	\begin{split}
		\left\langle \vsigma, \vtau \right\rangle 
		- \left\langle \vu \, ,\,  d^{k-1}\vtau \right\rangle 
		&=0 \qqq\qqq\; \vtau \in V^{k-1},\\
		\left\langle d^{k-1}\vsigma \, ,\,  \vv \right\rangle 
		+ \left\langle d^k \vu \, ,\,  d^k \vv \right\rangle
		&= \lambda \left\langle \vu \, ,\,  \vv \right\rangle \qq\, \vv \in V^k.
	\end{split}
\end{equation}
If the complex is Fredholm,
then there are at most a limit number of zero eigenvalues 
in \eqref{Hodge_eig_strong} and \eqref{Hodge_eig}.

\subsection{The approximation for Hodge Laplacian} \label{approx_Hodge}

The $ k $-form Hodge Laplacian
involves a segment of the complex \eqref{Hcomplex} with three spaces:
\begin{equation}\nonumber 
	\begin{split}
		\xymatrix{
			V^{k-1} \ar[r]^{d^{k-1}} 
			&V^k \ar[r]^{d^k} 
			&V^{k+1}.
		}
	\end{split}
\end{equation}
We use the finite element spaces $ V^{k-1}_h$ and $ V^{k}_h$
to discretize the continuous problems.
The corresponding discrete source and eigenvalue problems 
of \eqref{Hodge_mixed} and \eqref{Hodge_eig} are the 
following, respectively:

Given $ \vf \in W^k $, 
find  $ (\vsigma_h, \vu_h)\in V^{k-1}_h\times V^{k}_h$
such that
\begin{equation}\label{dsd_auxiliary_discrete}
	\begin{split}
		\left\langle \vsigma_h \, ,\,  \vtau_h \right\rangle 
		- \left\langle \vu_h \, ,\,  d^{k-1}\vtau_h \right\rangle 
		&=0  \qqq \, \qqq \vtau_h \in V^{k-1}_h,\\
		\left\langle d^{k-1}\vsigma_h \, ,\,  \vv_h \right\rangle 
		+ \left\langle d^k \vu_h \, ,\,  d^k \vv_h \right\rangle 
		+ c \left\langle \vu_h \, ,\,  \vv_h \right\rangle 
		&=  \left\langle \vf \, ,\,  \vv_h \right\rangle \;\,\qq \vv_h \in V^k_h.
	\end{split}
\end{equation}

Find $ (\lambda_h,\vu_h) \in \mbR \times V^k_h $ such that
\begin{equation}\label{Hodge_eig_discrete}
	\begin{split}
		\left\langle \vsigma_h \, ,\,  \vtau_h \right\rangle 
		- \left\langle \vu_h \, ,\,  d^{k-1}\vtau_h \right\rangle 
		&=0 \qqq \qqq \qq \; \vtau_h \in V^{k-1}_h,\\
		\left\langle d^{k-1}\vsigma_h \, ,\,  \vv_h \right\rangle 
		+ \left\langle d^k \vu_h \, ,\,  d^k \vv_h \right\rangle 
		&= \lambda_h \left\langle \vu_h \, \,  \vv_h \right\rangle \qq\, \vv_h \in V^k_h.
	\end{split}
\end{equation}
The finite element spaces are required to have the following properties \cite{arnold2018finite}:
\begin{itemize}
	\item $ 1^\circ $ Approximation property:
	\begin{equation}\nonumber 
		\begin{split}
			\textfor \;\vu \in V^j, \qq
			\lim_{h\to 0}\inf_{\vv_h \in V^j_h}\nm{\vu-\vv_h} = 0,
			\qq  j = k-1 \;\textand\; k.
		\end{split}
	\end{equation}
	\item $ 2^\circ $ Subcomplex property:
	$ d^{k-1} V^{k-1}_h \subset V^k_h $ and $ d^k V^{k}_h \subset V^{k+1}_h $,
	i.e. the three spaces form a complex segment:
	\begin{equation}\label{complex_segment_discrete}
		\begin{split}
			\xymatrix{
				V^{k-1}_h \ar[r]^{d^{k-1}} 
				& V^{k}_h \ar[r]^{d^{k}} 
				& V^{k+1}_h.	
			}
		\end{split}
	\end{equation}
	\item $ 3^\circ $ Bounded cochain projections
	$ \pi^j:V^j \to V^j_h $, $ j = k-1,k,k+1 $:
	the following diagram commutes:
	\begin{equation}\nonumber 
		\begin{split}
			\xymatrix{
				V^{k-1} \ar[r]^{d^{k-1}} \ar[d]^{\pi_h^{k-1}}
				& V^{k} \ar[r]^{d^{k}} \ar[d]^{\pi_h^{k}}
				& V^{k+1} \ar[d]^{\pi_h^{k+1}}\\
				V^{k-1}_h \ar[r]^{d^{k-1}} 
				& V^{k}_h \ar[r]^{d^{k}} 
				& V^{k+1}_h.	
			}
		\end{split}
	\end{equation}
	The projection $ \pi^j_h $ is bounded, 
	i.e. there exists a constant $ c $ such that
	$ \nm{\pi^j_h \vv} \leq c\nm{\vv} $
	for all $ \vv \in V^j $.
	
\end{itemize}
The above three properties can guarantee the convergence of 
the discrete source problem \eqref{dsd_auxiliary_discrete}.
To obtain the correct convergence of 
the discrete eigenvalue problem \eqref{Hodge_eig_discrete},
it needs two additional stronger properties \cite{arnold2010finite}:
\begin{itemize}
	\item $ 4^\circ $ The intersection $ V^k\cap V^*_k $ 
	is a dense subset of $ W^k $ with compact inclusion.
	\item $ 5^\circ $ The cochain projections $ \pi^k_h $ are bound in 
	$ \mcL(W^k,W^k) $
	uniformly with respect to $ h $.
\end{itemize}

\section{The analysis of discrete operators}\label{sec_analysis}

The discrete mixed weak formulations 
\eqref{dsd_auxiliary_discrete} and \eqref{Hodge_eig_discrete}
can be written in the following operator formulations, respectively:

Given $ \vf \in W^k $, 
find  $\vu_h \in  V^{k}_h$ such that
\begin{equation}\label{Hodge_discrete_operator}
	\begin{split}
		(d^k_h)^* d^k_h \vu_h + d^{k-1}_h (d^{k-1}_h)^* \vu_h + c \vu_h =  \vf_h.
	\end{split}
\end{equation}
Here, $ \vf_h $ is the projection of $ \vf $ in $ V^k_h $.

Find $ (\lambda_h,\vu_h) \in \mbR \times V^k_h $ such that
\begin{equation}\label{discrete_operator_eig}
	\begin{split}
		(d^k_h)^* d^k_h \vu_h + d^{k-1}_h (d^{k-1}_h)^* \vu_h = \lambda_h \vu_h.
	\end{split}
\end{equation}

The discrete differential operator $ d^k_h $
is defined as the restriction of $ d^k $ 
on the finite dimensional space $ V^j_h $:
\begin{equation}\label{dh_restriction}
	\begin{split}
		d^k_h = d^k|_{V^k_h}: V^k_h \to V^{k+1}_h.
	\end{split}
\end{equation} 
Then we have 
\begin{equation}\label{ddh0}
	\begin{split}
		d^k_h d^{k-1}_h \equiv d^k d^{k-1} = 0.
	\end{split}
\end{equation}
As the dimension of $ V^j_h $ is finite,
the discrete operator $ d^j_h $ is bounded.
Then its adjoint $ (d^j_h)^* $ is everywhere defined
and the spaces $ V^*_{jh} $ coincide with 
$ W^j_h = V^j_h $.
Then,
for $ \vu_h \in V^{k+1}_h $,
$ (d^k_h)^*\vu_h \in V^k_h $
can be presented as:
\begin{equation}\nonumber 
	\begin{split}
		\left\langle  (d^k_h)^*\vu_h \, ,\, \vv_h \right\rangle 
		=\left\langle \vu_h \, ,\,  d^k_h  \vv_h \right\rangle
		\qq \forall\; \vv \in V^k_h.
	\end{split}
\end{equation}
By the relation \eqref{ddh0}
we have
\begin{equation}\nonumber 
	\begin{split}
		\left\langle (d^{k-1}_h)^*(d^k_h)^*\vu_h \, ,\, \vv_h \right\rangle 
		= \left\langle \vu_h \, ,\, d^k_h d^{k-1}_h\vv_h \right\rangle 
		\equiv \left\langle \vu_h \, ,\, d^k d^{k-1}\vv_h \right\rangle=0
		\qq \forall \; \vu_h, \vv_h \in V^k_h.
	\end{split}
\end{equation}
Then we have 
\begin{equation}\label{dsdsh0}
	\begin{split}
		(d^{k-1}_h)^*(d^k_h)^* = 0.
	\end{split}
\end{equation}
The range and the null spaces of the discrete differential operators
are denotes by
\begin{equation}\label{def_subspace}
	\begin{split}
		\mfB^k_h = d^{k-1}_hV^{k-1}_h,
		\qq \mfZ^k_h = \mcN(d^k_h),
		\qq \mfB^{*}_{kh} = (d^{k}_h)^*V^{k+1}_h
		\qq \text{and}
		\qq \mfZ^{*}_{kh} = \mcN((d^{k-1}_h)^*).
	\end{split}
\end{equation}
The space of discrete harmonic $k$-forms is denoted by
$ \mfH^k_h = \mfZ^k_h \cap \mfZ^{*}_{kh} $.
Then there is the discrete Hodge decomposition \cite[(5.6)]{arnold2018finite}:
\begin{equation}\label{discrete_Hodge_decomposition}
	\begin{split}
		V^k_h = \mfB^k_h \oplus \mfH^k_h \oplus \mfB^{*}_{kh}.
	\end{split}
\end{equation}
By the definitions \eqref{def_subspace},
we have the following results.

\begin{lemma}\label{BN_decomp}
	We have the orthogonal decompositions for $ V^k_h $:
	\begin{equation}\nonumber 
		\begin{split}
			V^k_h = \mfB^{*}_{kh} \oplus \mfZ^k_h
			\qq \text{and}\qq
			V^k_h = \mfB^k_h \oplus \mfZ^{*}_{kh}.
		\end{split}
	\end{equation}
	\end{lemma}
\begin{proof}

	Let $ \left( \mfB^{*}_{kh} \right) ^{\perp} $ 
	be the orthogonal complement of $ \mfB^{*}_{kh} $ in $ V^k_h $.
	We have the orthogonal decomposition
	\begin{equation}\nonumber 
		\begin{split}
			V^k_h = \mfB^{*}_{kh} \oplus \left( \mfB^{*}_{kh} \right) ^{\perp}.
		\end{split}
	\end{equation}
    For $ \vu_h \in \left( \mfB^{*}_{kh} \right) ^{\perp} $,
    we have 
    \begin{equation}\nonumber 
    	\begin{split}
    		\left\langle d^k_h \vu_h \, ,\,   \vv_h \right\rangle
    		= \left\langle \vu_h \, ,\,   (d^k_h)^*\vv_h \right\rangle
    		= 0
    		\qq \text{for any  }  \vv_h \in V^{k+1}_h.
    	\end{split}
    \end{equation}
    Then we have $ d^k_h \vu_h = 0 $, $ \vu_h \in \mfZ^k_h $ 
    and $ \left( \mfB^{*}_{kh} \right) ^{\perp} \subset \mfZ^k_h $.
	
	For $ \vu_h \in \mfZ^k_h $,
	we have $ d^k_h \vu_h = 0 $ and
	\begin{equation}\nonumber 
		\begin{split}
			\left\langle  \vu_h \, ,\,   (d^k_h)^*\vv_h \right\rangle 
			= \left\langle  d^k_h\vu_h \, ,\,  \vv_h \right\rangle 
			= 0
			\qq \text{for any  }  \vv_h \in V^{k+1}_h.
		\end{split}
	\end{equation}
    Then we have $ \vu_h \perp \mfB^{*}_{kh} $, 
    $ \vu_h \in \left( \mfB^{*}_{kh} \right) ^{\perp} $,
    $  \mfZ^k_h \subset \left( \mfB^{*}_{kh} \right) ^{\perp} $
    and $ \left( \mfB^{*}_{kh} \right) ^{\perp} = \mfZ^k_h $.
    
    We finish the proof for the first decomposition.
	The proof for the second one is similar.
	\end{proof}

\begin{theorem}\label{invariant}
	The three subspaces $ \mfB^k_h $, $ \mfH^k_h $ and $ \mfB^{*}_{kh} $
	are invariant subspaces of the operators 
	$ (d^k_h)^* d^k_h $ and $ d^{k-1}_h (d^{k-1}_h)^*  $,
	and there are
	\begin{equation}\nonumber 
		\begin{split}
			(d^k_h)^* d^k_h \mfH^k_h &= \{ 0\}, \qqq \qq
			(d^k_h)^* d^k_h \mfB^{*}_{kh} = \mfB^{*}_{kh},\qqq \;\,\,
			(d^k_h)^* d^k_h \mfB^k_h = \{ 0\}, \\
			d^{k-1}_h (d^{k-1}_h)^* \mfH^k_h &= \{ 0\}, \qq
			d^{k-1}_h (d^{k-1}_h)^* \mfB^{*}_{kh} = \{ 0\}, \qq 
			d^{k-1}_h (d^{k-1}_h)^* \mfB^k_h = \mfB^k_h.
		\end{split}
	\end{equation}
	\end{theorem}

\begin{proof}
	By the definition $ \mfH^k_h = \mfZ^k_h \cap \mfZ^{*}_{kh} $,
	for $ \vu_h \in \mfH^k_h $,
	there is $ d^k_h \vu_h = (d^{k-1}_h)^* \vu_h = 0 $.
	Then we have $ (d^k_h)^* d^k_h \mfH^k_h = \{ 0\} $
	and $ d^{k-1}_h (d^{k-1}_h)^* \mfH^k_h = \{ 0\} $.
	
	By \eqref{ddh0}, \eqref{dsdsh0}
	and the definitions of $ \mfB^k_h $ and $ \mfB^{*}_{kh} $,
	we have $ (d^k_h)^* d^k_h \mfB^k_h = 
	(d^k_h)^* d^k_h d^{k-1}_h V^{k-1}_h = \{ 0\}$
	and 
	$ d^{k-1}_h (d^{k-1}_h)^* \mfB^{*}_{kh}
	= d^{k-1}_h (d^{k-1}_h)^*(d^{k}_h)^*V^{k+1}_h = \{ 0\} $.
	
	By the definition  $ \mfB^{*}_{kh} = (d^{k}_h)^*V^{k+1}_h \subset V^{k}_h $,
	we have $ d^{k}_h\mfB^{*}_{kh} \subset V^{k+1}_h $,
	and then $ (d^k_h)^* d^k_h \mfB^{*}_{kh} \subset \mfB^{*}_{kh} $.
	If there is a $ \vu_h \in \mfB^{*}_{kh} $ 
	and $ \vu_h \perp (d^k_h)^* d^k_h \mfB^{*}_{kh} $,
	we have 
	\begin{equation}\nonumber 
		\begin{split}
			0 = \left\langle  \vu_h \, ,\,   (d^k_h)^* d^k_h\vu_h \right\rangle
			= \left\langle  d^k_h\vu_h \, ,\,  d^k_h\vu_h \right\rangle.
		\end{split}
	\end{equation}
   Then we have $ d^k_h \vu_h = 0 $ and $ \vu_h \in \mfZ^k_h $.
   As $ \mfB^{*}_{kh} $ is orthogonal to $ \mfZ^k_h $ by Lemma \ref{BN_decomp}, 
   there must be $ \vu_h = 0 $.
   Then we obtain that 
   $ (d^k_h)^* d^k_h \mfB^{*}_{kh} = \mfB^{*}_{kh} $.
   The proof for 
   $ d^{k-1}_h (d^{k-1}_h)^* \mfB^k_h = \mfB^k_h $
   is similar.
	\end{proof}

According to the discrete Hodge decomposition \eqref{discrete_Hodge_decomposition},
the eigenpairs of \eqref{discrete_operator_eig}
can be divided into three orthogonal parts:
\begin{equation}\label{discrete_3_eigenpairs}
	\begin{split}
		\left\lbrace \left( 0\, ,\, \vu^{(0)}_{h,i} \right)  \right\rbrace_{i= 1}^{\dim \mfH^k_h},\qq
		\left\lbrace \left( \lambda^{(1)}_{h,i}\, ,\, \vu^{(1)}_{h,i} \right)  \right\rbrace_{i= 1}^{\dim \mfB^{*}_{kh}}
		\; \textand \;
		\left\lbrace \left(\lambda^{(2)}_{h,i}\, ,\, \vu^{(2)}_{h,i} \right)  \right\rbrace_{i= 1}^{\dim \mfB^k_h}.
	\end{split}
\end{equation}
The three subspaces of the discrete Hodge decomposition 
\eqref{discrete_Hodge_decomposition}
can be spanned by the corresponding eigenfunctions:
\begin{equation}\label{discrete_3_spaces}
	\begin{split}
		\mfH^k_h = \textspan \left\lbrace \vu^{(0)}_{h,i} \right\rbrace_{i= 1}^{\dim \mfH^k_h},
		\;
		\mfB^{k*}_h = \textspan \left\lbrace \vu^{(1)}_{h,i} \right\rbrace_{i= 1}^{\dim \mfB^{*}_{kh}} \; \textand\;
		\mfB^k_h = \textspan \left\lbrace  \vu^{(2)}_{h,i}  \right\rbrace_{i= 1}^{\dim \mfB^k_h}
	\end{split}
\end{equation} 
The discrete Hodge eigenvalue problem 
\eqref{Hodge_eig_discrete} can be divided into two eigenvalue problems:
\begin{equation}\nonumber 
	\begin{split}
		(d^k_h)^* d^k_h \vu_h = \lambda_h \vu_h
		\;\;\textand \;\;
		d^{k-1}_h (d^{k-1}_h)^* \vu_h = \lambda_h \vu_h.
	\end{split}
\end{equation}
Substituting the three sets of eigenpairs 
into the two eigenvalue problems,
we have
\begin{equation}\label{dds_dsd_eig_discrete}
	\begin{split}
		(d^k_h)^* d^k_h \vu^{(0)}_{h,i} &= 0,
		\qqq\qqq d^{k-1}_h (d^{k-1}_h)^* \vu^{(0)}_{h,i} = 0
		\qqq\qqq \text{for} \qq i = 1,\cdots, \dim \mfH^k_h,\\
		(d^k_h)^* d^k_h \vu^{(1)}_{h,i} &= \lambda^{(1)}_{h,i} \vu^{(1)}_{h,i}, 
		\qq \, d^{k-1}_h (d^{k-1}_h)^* \vu^{(1)}_{h,i} = 0
		\qqq\qqq \text{for} \qq i = 1,\cdots, \dim \mfB^{*}_{kh},\\
		(d^k_h)^* d^k_h \vu^{(2)}_{h,i} &= 0, 
		\qqq\qqq d^{k-1}_h (d^{k-1}_h)^* \vu^{(2)}_{h,i} = \lambda^{(2)}_{h,i} \vu^{(2)}_{h,i}
		\qq\, \text{for} \qq i = 1,\cdots, \dim \mfB^k_h.\\
	\end{split}
\end{equation}

\section{Auxiliary schemes for the discrete $ d^*d $ problems}\label{sec_auxiliarye}

Our purpose is to solve the corresponding discrete problems
of the following source and eigenvalue problems:
\begin{align}
	(d^k)^* d^k \vu  + c\vu &= \vf,  \label{dd_source}\\  
	(d^k)^* d^k \vu  &= \lambda \vu.  \label{dd_eig}
\end{align}
In this section,
we use the  ‘full’ Hodge Laplacian to 
construct auxiliary schemes for 
the problems of
the ‘half’ Laplacian operator $ (d^k)^* d^k $.

\subsection {The auxiliary discrete source problem}

We write the source problem \eqref{dd_source} in weak formulation:
given $ \vf \in W^k $, 
find  $ \vu_h \in V^{k}_h$
such that
\begin{equation}\label{original_weak_discrete}
	\begin{split}
		\left\langle d^k \vu \, ,\,  d^k \vv \right\rangle 
		+ c\left\langle \vu \, ,\,  \vv \right\rangle 
		= \left\langle \vf \, ,\,  \vv \right\rangle \qq \vv \in V^k.		%
	\end{split}
\end{equation}
The discrete weak formulation is:
given $ \vf\in W^k $, find $ \vu_h \in V^k_h $
such that
\begin{equation}\label{dsd_weak_discrete}
	\begin{split}
		\left\langle d^k \vu_h \, ,\,  d^k \vv_h \right\rangle 
		+ c \left\langle \vu_h \, ,\,  \vv_h \right\rangle 
		= \left\langle \vf \, ,\,  \vv_h \right\rangle \qq \vv_h \in V^k_h.
	\end{split}
\end{equation}
The operator form is:
given $ \vf\in W^k $, find $ \vu_h \in V^k_h $
such that
\begin{equation}\label{dsd_operator_discrete}
	\begin{split}
		(d^k_h)^* d^k_h \vu_h + c \vu_h = \vf_h,
	\end{split}
\end{equation}
where $ \vf_h $ is the projection of $ \vf $ in $ V^k_h $.

We insert a term $ d^{k-1}_h (d^{k-1}_h)^* $ into the equation \eqref{dsd_operator_discrete}
and obtain an \textbf{auxiliary discrete source problem}: 
given $ \vf\in W^k $, find $ \tilde \vu_h \in V^k_h $ such that
\begin{equation}\label{dsd_auxiliary_opertaor_discrete}
	\begin{split}
		(d^k_h)^* d^k_h \tilde \vu_h + d^{k-1}_h (d^{k-1}_h)^* \tilde \vu_h + c \tilde \vu_h = \vf_h.
	\end{split}
\end{equation}
We have the following theorem.
\begin{theorem}
	If $ \tilde \vu_h $ is the solution of 
	the auxiliary discrete  problem \eqref{dsd_auxiliary_opertaor_discrete}, 
	then
	\begin{equation}\nonumber 
		\begin{split}
			\vu_h = \tilde \vu_h + \frac{1}{c}  d^{k-1}_h (d^{k-1}_h)^* \tilde \vu_h
		\end{split}
	\end{equation}
	is the solution of the original discrete problem \eqref{dsd_operator_discrete}.
\end{theorem}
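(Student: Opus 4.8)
The plan is to prove the statement by direct verification: substitute the proposed $\vu_h = \tilde{\vu}_h + \frac{1}{c} d^{k-1}_h (d^{k-1}_h)^* \tilde{\vu}_h$ into the left-hand side $(d^k_h)^* d^k_h \vu_h + c\vu_h$ of the original discrete problem \eqref{dsd_operator_discrete} and check that the outcome equals $\vf_h$. Because the theorem asserts only that a given auxiliary solution $\tilde{\vu}_h$ \emph{produces} a solution $\vu_h$ — not that solutions are unique — a plug-and-check argument is all that is needed, and I do not have to invoke existence or well-posedness of either discrete system.

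The first step I would isolate is the algebraic identity that drives everything, namely \eqref{ddiszero}: $(d^k_h)^* d^k_h\, d^{k-1}_h (d^{k-1}_h)^* = 0$. This is the crux. Since by \eqref{dh_restriction} the discrete operators are restrictions of the continuous differentials, the subcomplex property $2^\circ$ yields the discrete analogue of $d\circ d = 0$, i.e. $d^k_h d^{k-1}_h = 0$, so $d^k_h$ annihilates the entire range of $d^{k-1}_h$. Setting $\vw_h := \frac{1}{c} d^{k-1}_h (d^{k-1}_h)^* \tilde{\vu}_h$, this says exactly that $\vw_h \in \mfB^k_h$ and hence $\vw_h \in \ker\!\big((d^k_h)^* d^k_h\big)$; this is the conceptual reason the correction term is invisible to the leading operator.

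With this in hand the computation is short. Writing $\vu_h = \tilde{\vu}_h + \vw_h$, the leading term obeys $(d^k_h)^* d^k_h \vu_h = (d^k_h)^* d^k_h \tilde{\vu}_h$ because $(d^k_h)^* d^k_h \vw_h = 0$ by the previous step, while the zeroth-order term expands as $c\vu_h = c\tilde{\vu}_h + c\vw_h = c\tilde{\vu}_h + d^{k-1}_h (d^{k-1}_h)^* \tilde{\vu}_h$. Adding these two contributions reconstitutes precisely the left-hand side of the auxiliary equation \eqref{dsd_auxiliary_opertaor_discrete}, which equals $\vf_h$ by hypothesis. This is exactly the regrouping displayed in \eqref{recover_proof}, read in reverse: one inserts the vanishing term $\frac{1}{c}(d^k_h)^* d^k_h\, d^{k-1}_h (d^{k-1}_h)^* \tilde{\vu}_h$ and factors the common combination $\tilde{\vu}_h + \vw_h$ out of the two operator terms.

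I expect no serious obstacle, as the argument reduces to a one-line substitution. The single point that genuinely requires care is justifying that the added piece $\vw_h$ is killed by $(d^k_h)^* d^k_h$; this rests entirely on the discrete complex identity $d^k_h d^{k-1}_h = 0$, which in turn depends on the subcomplex property $2^\circ$ holding for the chosen finite element spaces. Granted that property, $\vw_h$ necessarily sits in $\mfB^k_h \subset \ker\!\big((d^k_h)^* d^k_h\big)$ by the discrete Hodge decomposition \eqref{discrete_Hodge_decomposition}, and the recovery formula follows at once.
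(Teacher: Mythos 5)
Your proposal is correct and takes essentially the same route as the paper: both arguments hinge on the identity $(d^k_h)^* d^k_h\, d^{k-1}_h (d^{k-1}_h)^* = 0$ from \eqref{ddiszero}, and your plug-and-check substitution of $\vu_h = \tilde \vu_h + \frac{1}{c}\, d^{k-1}_h (d^{k-1}_h)^* \tilde \vu_h$ into \eqref{dsd_operator_discrete} is exactly the regrouping in \eqref{recover_proof} read in the opposite direction, as you note yourself. There is no gap; the justification that the correction term lies in $\mfB^k_h$ and is therefore annihilated by $(d^k_h)^* d^k_h$ is precisely what \eqref{ddiszero} encodes.
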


\begin{proof}
	By the relation $ d^k_h d^{k-1}_h = 0 $ in \eqref{ddh0},
	we inset a zero term 
	\begin{equation}\nonumber 
		\begin{split}
			\frac{1}{c}(d^k_h)^* d^k_h d^{k-1}_h (d^{k-1}_h)^* \tilde \vu_h \equiv 0
		\end{split}
	\end{equation}
	into \eqref{dsd_auxiliary_opertaor_discrete},
	and we have
	\begin{equation}\label{recover_proof}
		\begin{split}
			\vf_h 
			&= (d^k_h)^* d^k_h \tilde \vu_h + d^{k-1}_h (d^{k-1}_h)^* \tilde \vu_h + c \tilde \vu_h\\
			&= (d^k_h)^* d^k_h \tilde \vu_h + \frac{1}{c}(d^k_h)^* d^k_h d^{k-1}_h (d^{k-1}_h)^* \tilde \vu_h 
			+ d^{k-1}(d^{k-1})^* \tilde \vu_h + c \tilde \vu_h\\
			&= (d^k_h)^* d^k_h \left( \tilde \vu_h + \frac{1}{c}d^{k-1}_h (d^{k-1}_h)^* \tilde \vu_h\right) 
			+ c\left( \tilde \vu_h + \frac{1}{c}d^{k-1}_h (d^{k-1}_h)^* \tilde \vu_h \right).
		\end{split}
	\end{equation}
	Comparing \eqref{dsd_operator_discrete} and \eqref{recover_proof},
	we finish the proof. 
	\end{proof}

	We summarize this result in \eqref{auxiliary_source_operator}.
\begin{equation}\label{auxiliary_source_operator}
	\boxed{
		\begin{split}
			& \text{ Solve the equation} \\ 
			& \qq (d^k_h)^* d^k_h \tilde \vu_h + d^{k-1}_h (d^{k-1}_h)^* \tilde \vu_h + c \tilde \vu_h = \vf_h.\\
			& \text{ Then, } \vu_h = \tilde \vu_h + \frac{1}{c}  d^{k-1}_h (d^{k-1}_h)^* \tilde \vu_h\\
			& \text{ is the solution of}\\
			& \qq (d^k_h)^* d^k_h \vu_h + c \vu_h = \vf_h.
		\end{split}
	}
\end{equation}
The auxiliary discrete problem \eqref{dsd_auxiliary_opertaor_discrete} 
is nothing but the Hodge Laplacian problem \eqref{Hodge_discrete_operator}
with the variable $ \vu_h $ being replaced by $ \tilde\vu_h $.
Its weak formulation is:
given $ \vf \in W^k $, 
find  $ (\vsigma_h, \tilde\vu_h)\in V^{k-1}_h\times V^{k}_h$
such that
\begin{equation}\label{dsd_auxiliary_discrete_weak}
	\begin{split}
		\left\langle \vsigma_h \, ,\,  \vtau_h \right\rangle 
		- \left\langle \tilde\vu_h \, ,\,  d^{k-1}\vtau_h \right\rangle 
		&=0  \qqq  \qqq \, \vtau_h \in V^{k-1}_h,\\
		\left\langle d^{k-1}\vsigma_h \, ,\,  \vv_h \right\rangle 
		+ \left\langle d^k \tilde \vu_h \, ,\,  d^k \vv_h \right\rangle 
		+ c \left\langle \tilde\vu_h \, ,\,  \vv_h \right\rangle 
		&=  \left\langle \vf \, ,\,  \vv_h \right\rangle \;\,\qq \vv_h \in V^k_h.
	\end{split}
\end{equation}

\subsection{The auxiliary discrete eigenvalue problem}\label{eig_cases}

We write the $ (d^k)^* d^k $ eigenvalue problem \eqref{dd_eig} in weak formulation:
find $ (\lambda \, ,\, \vu) \in \mbR \times V^k$
such that
\begin{equation}\label{original_eig_weak}
	\begin{split}
		\left\langle d^k \vu \, ,\,  d^k \vv \right\rangle 
		= \lambda \left\langle \vu \, ,\,  \vv \right\rangle \qq \vv \in V^k.
	\end{split}
\end{equation}
The discrete eigenvalue problem is:
find $ (\lambda_h \, ,\, \vu_h) \in \mbR \times V^k_h$
such that
\begin{equation}\label{dsd_eig_weak_discrete}
	\begin{split}
		\left\langle d^k \vu_h \, ,\,  d^k \vv_h \right\rangle 
		= \lambda_h \left\langle \vu_h \, ,\,  \vv_h \right\rangle \qq \vv_h \in V^k_h.
	\end{split}
\end{equation}
This problem can be also written in discrete operator form:
find $ (\lambda_h \, ,\, \vu_h) \in \mbR \times V^k_h$ such that
\begin{equation}\label{dsd_eig_operator_discrete}
	\begin{split}
		(d^k_h)^* d^k_h \vu_h = \lambda_h \vu_h.
	\end{split}
\end{equation}
Similar to the auxiliary discrete source problem 
\eqref{dsd_auxiliary_opertaor_discrete},
we insert the term $ d^{k-1}_h (d^{k-1}_h)^* $ 
into the eigenvalue problem \eqref{dsd_eig_operator_discrete}
and obtain an \textbf{auxiliary discrete eigenvalue problem}: 
find $ (\lambda_h \, ,\, \vu_h) \in \mbR \times V^k_h$
such that
\begin{equation}\label{dsd_eig_auxiliary_operator_discrete}
	\begin{split}
		(d^k_h)^* d^k_h \vu_h + d^{k-1}_h (d^{k-1}_h)^* \vu_h = \lambda_h \vu_h.
	\end{split}
\end{equation}
This auxiliary problem is 
just the weak mixed eigenvalue problem \eqref{Hodge_eig_discrete}.

Let $ (\lambda_h \, ,\, \vu_h) $ be an eigenpair  of 
the auxiliary discrete eigenvalue problem \eqref{dsd_eig_auxiliary_operator_discrete}.
We decompose the eigenvector $ \vu_h $ into three orthogonal parts according to 
the discrete Hodge decomposition \eqref{discrete_Hodge_decomposition}:
\begin{equation}\label{decomp_uh}
	\begin{split}
		\vu_h = \vu_h^{(0)} + \vu_h^{(1)} + \vu_h^{(2)},
	\end{split}
\end{equation}
where $ \vu_h^{(0)} \in \mfH^k_h  $,
$ \vu_h^{(1)} \in \mfB^{*}_{kh} $,
and $ \vu_h^{(2)} \in \mfB^k_h $.
Putting the decomposition into \eqref{dsd_eig_auxiliary_operator_discrete},
we have
\begin{equation}\nonumber 
	\begin{split}
		(d^k_h)^* d^k_h \vu_h + d^{k-1}_h (d^{k-1}_h)^* \vu_h
		\equiv (d^k_h)^* d^k_h \vu_h^{(1)} + d^{k-1}_h (d^{k-1}_h)^* \vu_h^{(2)}
		&=\lambda_h \left( \vu_h^{(0)} + \vu_h^{(1)} + \vu_h^{(2)} \right),\\
		\underbrace{(d^k_h)^* d^k_h \vu_h^{(1)} - \lambda_h \vu_h^{(1)} }_{\mfB^{*}_{kh}}
		+ \underbrace{d^{k-1}_h (d^{k-1}_h)^* \vu_h^{(2)}- \lambda_h \vu_h^{(2)} }_{\mfB^k_h}
		&=\underbrace{\lambda_h \vu_h^{(0)}}_{\mfH^k_h}.
	\end{split}
\end{equation}
By Theorem \ref{invariant},
the three parts in the above equality belong to three orthogonal subspaces, respectively.
Then, the three parts are linear independent.
To make the equality hold,
the three parts must be all zero.
Then we have the following two conclusions:
\begin{itemize}
	
	\item If $ \vu_h^{(0)}  \not = 0 $, 
	there is that $\lambda_h = 0 $, $ \vu_h^{(1)} = 0 $ and $ \vu_h^{(2)} = 0 $ .
	
	\item If $ \vu_h^{(1)} \not = 0 $ and $ \vu_h^{(2)} \not = 0 $,
    $ \vu_h^{(1)} $ and $ \vu_h^{(2)} $ both are the eigenvectors 
	of the auxiliary discrete eigenvalue problem \eqref{dsd_eig_auxiliary_operator_discrete},
	and they share the same eigenvalue $ \lambda_h \not = 0 $.
	In this case, $ \vu_h^{(0)}  = 0 $

	\end{itemize}

As $ (\lambda_h \, ,\, \vu_h) $ is an eigenpair 
of the auxiliary problem \eqref{dsd_eig_auxiliary_operator_discrete},
not the eigenpair of the original problem 
\eqref{dsd_eig_weak_discrete} or
\eqref{dsd_eig_operator_discrete},
we should judge 
whether this pair belongs to the desired pairs.
By \eqref{dsd_eig_auxiliary_operator_discrete} 
and the decomposition \eqref{decomp_uh},
we have
\begin{equation}\label{auxiliary_recompute_eig}
	\begin{split}
		 \lambda_h 
		&= \frac{\left\langle d^k \vu_h \, ,\,  d^k \vv_h \right\rangle
		+ \left\langle (d^{k-1}_h)^* \vu_h \, ,\,  (d^{k-1}_h)^* \vu_h \right\rangle}
		{\left\langle \vu_h \, ,\,  \vu_h \right\rangle}\\
		&\equiv \frac{\left\langle d^k \vu_h^{(1)} \, ,\,  d^k \vu_h^{(1)} \right\rangle
			+ \left\langle (d^{k-1}_h)^* \vu_h^{(2)} \, ,\,  (d^{k-1}_h)^* \vu_h^{(2)} \right\rangle}
		{	\left\langle \vu_h^{(0)} \, ,\,  \vu_h^{(0)} \right\rangle 
			+ \left\langle \vu_h^{(1)} \, ,\,  \vu_h^{(1)} \right\rangle
			+ \left\langle \vu_h^{(2)} \, ,\,  \vu_h^{(2)} \right\rangle 
		}.
	\end{split}
\end{equation}
We recompute the eigenvalue in the eigenpair 
by the original discrete eigenvalue problem 
\eqref{dsd_eig_weak_discrete}:
\begin{equation}\label{recompute_eig}
	\begin{split}
		\tilde \lambda_h = \frac{\left\langle d^k \vu_h \, ,\,  d^k \vu_h \right\rangle}
		{\left\langle \vu_h \, ,\,  \vu_h \right\rangle} 
		\equiv
		\frac{\left\langle d^k \vu_h^{(1)} \, ,\,  d^k \vu_h^{(1)}\right\rangle}
		{\left\langle \vu_h^{(0)} \, ,\,  \vu_h^{(0)} \right\rangle 
			+ \left\langle \vu_h^{(1)} \, ,\,  \vu_h^{(1)} \right\rangle
			+ \left\langle \vu_h^{(2)} \, ,\,  \vu_h^{(2)} \right\rangle }.
	\end{split}
\end{equation}
By comparing the values $ \lambda_h $ and $ \tilde\lambda_h $,
there are the following cases:
\begin{itemize}
	
	\item If $ \lambda_h  = 0 $, we know 
	that the components $ \vu_h^{(1)} = \vu_h^{(2)}=0 $ in \eqref{decomp_uh}.
	Then we have $ \vu_h \equiv \vu_h^{(0)} \in \mfH^k_h $.
	By \eqref{recompute_eig},
	it is obvious that $ \tilde \lambda_h  = 0 $ in this case.
	
	\item If $ \lambda_h  \not = 0 $ and $ \tilde \lambda_h = \lambda_h $,
	we have $ \vu_h^{(0)} = \vu_h^{(2)}=0 $ 
	and $ \vu_h \equiv \vu_h^{(1)} \in \mfB^{*}_{kh}  $.
	
	\item If $ \lambda_h  \not = 0 $ and $ \tilde \lambda_h = 0 $,
	we have $ \vu_h^{(0)} = \vu_h^{(1)}=0 $ 
	and $ \vu_h \equiv \vu_h^{(2)} \in \mfB^k_h  $.
	
	\item If $ \lambda_h  \not = 0 $
	and $ 0 <\tilde \lambda_h < \lambda_h $,
	we have $ \vu_h^{(0)} = 0 $,
	$ \vu_h^{(1)} \not = 0 $ and $ \vu_h^{(2)} \not = 0 $.
	In this case, $ \vu_h^{(1)} $ and $ \vu_h^{(2)} $ 
	share the same eigenvalue $ \lambda_h $ in 
	the auxiliary discrete eigenvalue problem 
	\eqref{dsd_eig_auxiliary_operator_discrete} 
	or \eqref{auxiliary_recompute_eig}.
	The two components in $ \vu_h = \vu_h^{(1)} + \vu_h^{(2)} $
	can be separated by:
	\begin{equation}\nonumber
		\begin{split}
			(d^k_h)^* d^k_h \vu_h 
			& \equiv (d^k_h)^* d^k_h (\vu_h^{(1)} + \vu_h^{(2)})	
			= (d^k_h)^* d^k_h \vu_h^{(1)} = \lambda_h\vu_h^{(1)} \in \mfB^{*}_{kh},\\
			d^{k-1}_h (d^{k-1}_h)^* \vu_h
			& \equiv d^{k-1}_h (d^{k-1}_h)^* (\vu_h^{(1)} + \vu_h^{(2)})	
			= d^{k-1}_h (d^{k-1}_h)^* \vu_h^{(2)} = \lambda_h\vu_h^{(2)} \in \mfB^k_h .
		\end{split}
	\end{equation}
	
	\end{itemize}
We summarize these cases in Table \ref{recognize_eig_continuous}.
The functions in  $ \mfH^k_h $ and $ \mfB^k_h $
correspond to
the eigenfunctions with $ 0 $ eigenvalues in 
the original discrete eigenvalue problem
\eqref{dsd_eig_weak_discrete} or
\eqref{dsd_eig_operator_discrete},
while the functions in $ \mfB^{*}_{kh} $ 
correspond to the eigenfunction with nonzero eigenvalues.
In actual applications,
the desired eigenpairs are mostly 
the parts in $ \mfH^k_h $ and $ \mfB^{*}_{kh} $.
These eigenpairs can be identified 
by recomputing the eigenvalues and this table.
\begin{table}[ht] 
	\caption{Recognize the type of the eigenpair $ (\lambda_h \, ,\, \vu_h) $
	of the auxiliary eigenvalue problem
	\eqref{dsd_eig_auxiliary_operator_discrete}. }
\label{recognize_eig_continuous}
	\centering  
	\begin{tabular}{|c|c|c| c| }	
		\hline
		$ \lambda_h = 0  $ & \multicolumn{2}{|c|}{$ \vu_h \in \mfH^k_h $} & Type 0\\
		\hline
		\multirow{3}*{$ \lambda_h \not = 0  $}
		& $ \tilde \lambda_h = \lambda_h  $
		&  $ \vu_h \in \mfB^{*}_{kh} $  & Type 1\\
		\cline{2-4}
		~ & $ \tilde \lambda_h = 0 $
		& $ \vu_h\in \mfB^k_h $ & Type 2\\
		\cline{2-4}
		~ & $ 0 <\tilde \lambda_h < \lambda_h $ &
		\tabincell{c}{$ (d^k_h)^* d^k_h  \vu_h \in \mfB^{*}_{kh}  $  \\
			$ d^{k-1}_h (d^{k-1}_h)^*\vu_h \in \mfB^k_h $} &
		Type 3 \\
		\hline
	\end{tabular}
\end{table}
We summarize this auxiliary discrete eigenvalue problem as
\begin{equation}\label{auxiliary_eig_operator}
	\boxed{
		\begin{split}
			& \text{ Find eigenpairs 
				$ \left\lbrace  (\lambda_{h,i} \, ,\, \vu_{h,i})  \right\rbrace_{i}^k $ of}  \\ 
			& \qq (d^k_h)^* d^k_h \vu_h + d^{k-1}_h (d^{k-1}_h)^* \vu_h = \lambda_h \vu_h.\\
			& \text{ Recompute the eigenvalue by }\\
			& \qq \tilde \lambda_{h,i} 
			= \frac{\left\langle d^k \vu_{h,i} \, ,\,  d^k \vu_{h,i} \right\rangle}
			{\left\langle \vu_{h,i} \, ,\, \vu_{h,i} \right\rangle}.\\
			& \text{ Recognize the types of the eigenpairs by 
				Table \ref{recognize_eig_continuous}. }
		\end{split}
	}
\end{equation}

\subsection{Why do we use the auxiliary formulations?}

In the next section, 
we will consider how to deal with
the auxiliary scheme in actual computations.
Before that, we talk about 
the reason why we construct these auxiliary formulations.

If we use direct methods to solve the
corresponding linear system of 
the discrete problem $ (d^k_h)^* d^k_h \vu_h + c \vu_h = \vf_h $,
the auxiliary problem is definitely superfluous.
However, for large-scale problems,
direct methods become inefficient and even impossible.
When using iterative methods, the huge kernel of $ (d^k_h)^* d^k_h $
causes difficulties in using many iterative methods and preconditioning techniques 
that are efficient for Laplace-like problems.
If the complex has \emph{compactness property},
the dimension of the harmonic forms $ \mfH^k $ is limited.
If the discrete complex satisfies the approximation properties in Section \ref{approx_Hodge},
the discrete harmonic from $ \mfH^k_h $ and $ \mfH^k $ are isomorphic 
\cite[Theorem 5.1]{arnold2018finite}.
This means that the dimension of the kernel 
of the discrete Hodge Laplacian 
$ (d^k_h)^* d^k_h  + d^{k-1}_h (d^{k-1}_h)^* $ are at most finite.
Also from the \emph{compactness property},
the spectral distribution of the discrete auxiliary problem 
\eqref{dsd_auxiliary_opertaor_discrete} becomes Laplace-like.

When solving the discrete eigenvalue problem 
$ (d^k_h)^* d^k_h \vu_h = \lambda_h \vu_h $,
the kernel of $ (d^k_h)^* d^k_h $ like a deep hole.
If the eigensolver is chosen thoughtlessly,
it is easy to fall into this hole.
If we use the auxiliary form
$ ((d^k_h)^* d^k_h + d^{k-1}_h (d^{k-1}_h)^*)\vu_h = \lambda_h \vu_h $,
we can compute its eigenvalues 
in the same way as solving Laplace eigenvalue problems.
After computing some eigenpairs of it,
we can identify the desired pairs 
by Table \ref{recognize_eig_continuous}.
Furthermore, 
the preconditioners designed for the auxiliary source problems
is likely valid for the auxiliary eigenvalue problems,
which can reduce the difficulty significantly 
in solving large-scale eigenvalue problems.

%
%

\section{The matrix forms of the auxiliary schemes }\label{matrix_aux}

In the previous sections,
we construct the auxiliary schemes
for the discrete problems
in operator forms.
In actual computations,
we deal with these discrete problems 
in matrix form.
In this section,
we construct the auxiliary matrix forms
according to the mixed finite element discretizations.

The matrices is generated by the finite element spaces
on the complex segment \eqref{complex_segment_discrete}.
Let $ M = \dim V^{k-1}_h $, $ N = \dim V^{k}_h $,
$ \left\lbrace \vtau_{h,i}\right\rbrace_{i=1}^M  $
and
$ \left\lbrace \vv_{h,i}\right\rbrace_{i=1}^N  $
be the base of the space $ V^{k-1}_h $ and $ V^k_h $,
respectively.
We use light letters to denote
the coefficient vectors of the 
functions in $ V^{k-1} $ and $ V^{k} $.
According to the mixed formulations,
we construct the coefficient matrices
$ \mcA \in \mbC^{N\times N} $ and $ \mcB \in \mbB^{N \times M} $,
the mass matrices
$ \mcM_k \in \mbC^{N\times N} $ and $ \mcM_{k-1} \in \mbB^{M \times M} $
and the right-hand side $ f \in \mbC^N $:
\begin{equation}\nonumber 
	\begin{split}
		\mcA_{ij} &= \left\langle d^k \vv_{h,i} \, ,\, d^k \vv_{h,j} \right\rangle, 
		\qqq \qq\;\;\, \mcB_{ij} = \left\langle d^{k-1}\vtau_{h,j} \, ,\,  \vv_{h,i} \right\rangle,\\
		(\mcM_k)_{ij} &= \left\langle  \vv_{h,i} \, ,\,  \vv_{h,j} \right\rangle,
		\qqq\qq (\mcM_{k-1})_{ij} = \left\langle  \vtau_{h,i} \, ,\,  \vtau_{h,j} \right\rangle, \\
		f_i &= \left\langle  \vf_V \, ,\,  \vv_{h,i} \right\rangle.
	\end{split}
\end{equation}
Then the matrix form  of 
the original discrete source problem \eqref{dsd_weak_discrete} is 
\begin{equation}\label{dsd_matrix}
	\begin{split}
		\mcA u   + c \mcM_k  u &= f.
	\end{split}
\end{equation}
The matrix form of its auxiliary discrete mixed problem 
\eqref{dsd_auxiliary_discrete_weak} is 
\begin{equation}\nonumber 
	\begin{split}
		\mcM_{k-1} \tau - \mcB^T \tilde u &=0, \\
		\mcB \tau + \mcA \tilde u   + c \mcM_k  \tilde u &= f.
	\end{split}
\end{equation}
Here, $ u ,\tilde u \in \mbC^N $ and $ \tau $ 
are the coefficients vectors of $ \vu_h $, $ \tilde \vu_h $ and $ \vtau_h $
in \eqref{dsd_weak_discrete} and \eqref{dsd_auxiliary_discrete_weak}, respectively.
By eliminating the variable $ \tau $,
we obtain the primary form:
\begin{equation}\label{dsd_auxiliary_matrix}
	\begin{split}
		\left( \mcA + \mcB \mcM_{k-1}^{-1} \mcB^T + c \mcM_k\right)  \tilde u = f.
	\end{split}
\end{equation}
The original discrete eigenvalue problem 
\eqref{dsd_eig_operator_discrete} in matrix form is
\begin{equation}\label{orig_eig_matrix}
	\begin{split}
		\mcA u = \lambda_h \mcM_k u.
	\end{split}
\end{equation}
The matrix form of the auxiliary eigenvalue problems 
\eqref{dsd_eig_auxiliary_operator_discrete} is
\begin{equation}\label{orig_eig_matrix_aux}
	\begin{split}
		\left( \mcA + \mcB \mcM_{k-1}^{-1} \mcB^T \right)  u.
		= \lambda_h \mcM_k u.
	\end{split}
\end{equation}

Let $ u^{(0)}_{i} $, $ u^{(1)}_{i} $ and $ u^{(2)}_{i} $
be the coefficients vector of 
$ \vu^{(0)}_{h,i} $, $ \vu^{(1)}_{h,i} $ and $ \vu^{(2)}_{h,i} $ 
in \eqref{discrete_3_spaces}, 
respectively.
Then, consistent with \eqref{discrete_3_eigenpairs},
the eigenpairs can be divided into three parts:
\begin{equation}\label{discrete_3_eigenpairs_vector}
	\begin{split}
		\left\lbrace \left( 0\, ,\, u^{(0)}_{i} \right)  \right\rbrace_{i= 1}^{\dim \mfH^k_h},
		\qq
		\left\lbrace \left( \lambda^{(1)}_{h,i}\, ,\, u^{(1)}_{i} \right)  \right\rbrace_{i= 1}^{\dim \mfB^{*k}_{h}}
		\; \textand \;
		\left\lbrace \left(\lambda^{(2)}_{h,i}\, ,\, u^{(2)}_{i} \right)  \right\rbrace_{i= 1}^{\dim \mfB^k_h}.
	\end{split}
\end{equation}
Also, consistent with \eqref{dds_dsd_eig_discrete},
we have
\begin{equation}\label{eig_relation_matrix}
	\begin{split}
		\mcA u^{(0)}_{i} &= 0, 
		\qqq \qqq \qq\;\; \mcB \mcM_{k-1}^{-1} \mcB^T u^{(0)}_{i} = 0
		\qqq\qqq \qq \text{for} \qq i = 1,\cdots, \dim \mfH^k_h,\\
		\mcA u^{(1)}_{i} &= \lambda^{(1)}_{h,i}\mcM_k u^{(1)}_{i},
		\qq \mcB \mcM_{k-1}^{-1} \mcB^T u^{(2)}_{i} = 0
		\qqq\qqq\qq \text{for} \qq i = 1,\cdots, \dim \mfB^{*}_{kh},\\
		\mcA u^{(2)}_{i} &= 0,
		\qqq\qqq \qq\;\; \mcB \mcM_{k-1}^{-1} \mcB^T u^{(2)}_{i} = \lambda^{(2)}_{h,i}\mcM_k u^2_{i}
		\qq\, \text{for} \qq i = 1,\cdots, \dim \mfB^k_h.\\
	\end{split}
\end{equation}
The eigenvectors in \eqref{eig_relation_matrix} 
can span the three subspace of $ \mbC^N $:
\begin{equation}\label{CN_decomp}
	\begin{split}
		\mbC_0 =\textspan \left\lbrace  u^{(0)}_{i}   \right\rbrace_{i= 1}^{\dim \mfH^k_h},
		\;\;
		\mbC_1 =\textspan \left\lbrace  u^{(1)}_{i}   \right\rbrace_{i= 1}^{\dim \mfB^{k*}_{h}}
		\;\; \textand \;\;
		\mbC_2 &=\textspan \left\lbrace  u^{(2)}_{i}   \right\rbrace_{i= 1}^{\dim\mfB^k_h}.\\
	\end{split}
\end{equation}
According to the discrete Hodge decomposition \eqref{discrete_Hodge_decomposition},
we have the following $ \mcM_k $-orthogonal decomposition
for $ \mbC^N $:
\begin{equation}\label{decomposition_vector}
	\begin{split}
		\mbC^N = \mbC_0 \oplus_{\mcM_k} \mbC_1 \oplus_{\mcM_k} \mbC_2
	\end{split}
\end{equation}
Summarizing \eqref{eig_relation_matrix} and \eqref{CN_decomp},
we have the following theorem.
\begin{theorem}\label{invariant_matrix}
	\begin{equation}\nonumber 
		\begin{split}
			\mcA \mbC_0 &= \{ 0\}, \qqq\qqq\qq
			\mcA \mbC_1 = \mcM \mbC_1, \qq\qqq\qq
			\mcA \mbC_2 = \{ 0\},\\
			\mcB \mcM_{k-1}^{-1} \mcB^T \mbC_0 &= \{ 0\}, \qq
			\mcB \mcM_{k-1}^{-1} \mcB^T \mbC_1 = \{ 0\}, \qq
			\mcB \mcM_{k-1}^{-1} \mcB^T \mbC_2 = \mcM \mbC_2.\\
		\end{split}
	\end{equation}
	\end{theorem}
The following theorem is the matrix representation for
$ (d^k_h)^* d^k_h d^{k-1}_h (d^{k-1}_h)^* $.
\begin{theorem}\label{dd_is_0_matrix_theorem}
	$ \mcA \mcM_{k}^{-1}\mcB \mcM_{k-1}^{-1} \mcB^T = 0. $
\end{theorem}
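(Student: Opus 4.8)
The plan is to translate each factor in the product $\mcA\mcM_k^{-1}\mcB^T\mcM_{k-1}^{-1}\mcB$ into the discrete operator it represents on coefficient vectors, so that the whole product becomes the matrix of $(d^k_h)^* d^k_h\, d^{k-1}_h (d^{k-1}_h)^*$, which already vanishes by \eqref{ddiszero}. First I would record the matrix representations, reading them off from the mixed formulation \eqref{dsd_auxiliary_discrete_weak} and the mass-matrix identities already used in \eqref{space_decomp}. Writing the action of an operator as left multiplication on the coefficient vector of its argument, one has
\[
(d^k_h)^* d^k_h \;\longleftrightarrow\; \mcM_k^{-1}\mcA, \qquad d^{k-1}_h \;\longleftrightarrow\; \mcM_k^{-1}\mcB^T, \qquad (d^{k-1}_h)^* \;\longleftrightarrow\; \mcM_{k-1}^{-1}\mcB.
\]
Each of these follows from testing the defining weak relation against a basis function and using that the inner product $\left\langle \cdot, \cdot \right\rangle$ on $V^k_h$ (resp. $V^{k-1}_h$) is encoded by $\mcM_k$ (resp. $\mcM_{k-1}$); the mass-matrix inverse appears precisely because one must solve for the coefficient vector of the image.

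Composing these three representations, the matrix of $(d^k_h)^* d^k_h\, d^{k-1}_h (d^{k-1}_h)^*$ acting on $\mbC^N$ is
\[
\bigl(\mcM_k^{-1}\mcA\bigr)\bigl(\mcM_k^{-1}\mcB^T\bigr)\bigl(\mcM_{k-1}^{-1}\mcB\bigr) = \mcM_k^{-1}\,\mcA\mcM_k^{-1}\mcB^T\mcM_{k-1}^{-1}\mcB.
\]
By \eqref{ddiszero} this operator is identically zero, so its matrix is the zero matrix; since $\mcM_k$ is symmetric positive definite, hence invertible, I may cancel the leading $\mcM_k^{-1}$ and conclude $\mcA\mcM_k^{-1}\mcB^T\mcM_{k-1}^{-1}\mcB = 0$.

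A shorter route, which I would also mention, bypasses the full composition: it suffices to prove the stronger identity $\mcA\mcM_k^{-1}\mcB^T = 0$. For any $\sigma\in\mbC^M$ with associated $\vsigma_h = \sum_j\sigma_j\vtau_{h,j}\in V^{k-1}_h$, the vector $\mcM_k^{-1}\mcB^T\sigma$ is the coefficient vector of $d^{k-1}_h\vsigma_h$, where the subcomplex property $2^\circ$ guarantees $d^{k-1}\vsigma_h$ already lies in $V^k_h$. Then the $i$-th entry of $\mcA\mcM_k^{-1}\mcB^T\sigma$ equals $\left\langle d^k d^{k-1}\vsigma_h, d^k\vv_{h,i} \right\rangle$, which is zero because $d^k\circ d^{k-1}=0$ is built into the complex \eqref{complex_segment_discrete}. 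Multiplying $\mcA\mcM_k^{-1}\mcB^T=0$ on the right by $\mcM_{k-1}^{-1}\mcB$ then yields the claim.

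I expect the only real care to be the bookkeeping in the first step: placing the mass matrices and the transpose on $\mcB$ correctly, i.e. verifying that $d^{k-1}_h$ is represented by $\mcM_k^{-1}\mcB^T$ while its adjoint $(d^{k-1}_h)^*$ is represented by $\mcM_{k-1}^{-1}\mcB$. Once these representations are pinned down, the vanishing is immediate from the defining property $d^k d^{k-1}=0$ of the complex, exactly mirroring the operator identity \eqref{ddiszero}.
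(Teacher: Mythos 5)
Your proof is correct, and it takes a genuinely different route from the paper's. The paper proves the theorem spectrally: using the $\mcM_k$-orthogonal eigenvector decomposition $\mbC^N = \mbC_0 \oplus_{\mcM_k}\mbC_1\oplus_{\mcM_k}\mbC_2$ of \eqref{decomposition_vector} together with the mapping properties \eqref{space_decomp}, it observes that $\mcM_k^{-1}\mcB^T\mcM_{k-1}^{-1}\mcB$ sends all of $\mbC^N$ into $\mbC_2$ while $\mcA\mbC_2 = \{0\}$, so the composite vanishes. You instead pin down the coefficient-space representations of the discrete operators, $(d^k_h)^*d^k_h \leftrightarrow \mcM_k^{-1}\mcA$, $d^{k-1}_h \leftrightarrow \mcM_k^{-1}\mcB^T$, $(d^{k-1}_h)^*\leftrightarrow \mcM_{k-1}^{-1}\mcB$, all of which are correct under the paper's operative convention that $\mcB\in\mbC^{M\times N}$ acts as $u\mapsto\mcB u$ (the entrywise definition of $\mcB_{ij}$ in the paper has its indices transposed relative to this, but your placement of the transpose and the mass matrices matches the mixed system as actually used), so that the product in question is $\mcM_k$ times the matrix of $(d^k_h)^*d^k_h\,d^{k-1}_h(d^{k-1}_h)^*$, which vanishes by \eqref{ddiszero}. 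Your shorter second route is cleaner still: since $\mcM_k^{-1}\mcB^T\sigma$ is the coefficient vector of $d^{k-1}\vsigma_h\in V^k_h$, each entry of $\mcA\mcM_k^{-1}\mcB^T\sigma$ equals $\left\langle d^k d^{k-1}\vsigma_h \, ,\, d^k\vv_{h,i}\right\rangle = 0$, which establishes the stronger identity $\mcA\mcM_k^{-1}\mcB^T = 0$ and yields the theorem by right multiplication. What your argument buys is a self-contained, elementary proof needing only the subcomplex property $d^kd^{k-1}=0$ and the matrix definitions, with none of the eigenpair machinery of \eqref{eig_relation_matrix}--\eqref{space_decomp}; it also makes literal the paper's remark that the theorem is ``the matrix presentation of'' \eqref{ddiszero}. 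What the paper's route buys is reuse: the same decomposition argument is repeated nearly verbatim for Corollary \ref{modified_U} and underlies the eigenpair classification of Table \ref{recognize_eig}, so its proof doubles as a warm-up for those later steps.
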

\begin{proof}
	By Theorem \ref{invariant_matrix}, we have	
	\begin{equation}\nonumber
		\begin{split}
			\mcA  \mcM_{k}^{-1}\mcB \mcM_{k-1}^{-1} \mcB^T \mbC^N 
			&\equiv \mcA\mcM_{k}^{-1}\mcB \mcM_{k-1}^{-1} \mcB^T 
			\left( \mbC_0 \oplus_{\mcM_k} \mbC_1 \oplus_{\mcM_k} \mbC_2\right) \\
			&= \mcA \mcM_{k}^{-1}\mcB \mcM_{k-1}^{-1} \mcB^T \mbC_2\\ 
			&=  \mcA \mbC_2 \\
			&= \{ 0 \}.
		\end{split}
	\end{equation}	
\end{proof}

\subsection{ The modification of $ \mcB^T \mcM_{k-1}^{-1}\mcB $}

As the inverse of the mass matrix $ \mcM_{k-1}^{-1} $
is involved,
the systems of \eqref{dsd_auxiliary_matrix}
and \eqref{orig_eig_matrix_aux}
lose the sparsity.
It is intolerable to
compute the explicit inverse
of a large-sale matrix.
An alternative method is to solve 
a mass equation $ \mcM_{k-1} \tau = \mcB u $
when computing the product  $ \mcB \mcM_{k-1}^{-1} \mcB^T u $
in iterative methods.
Mostly, the conditions of the mass matrices are quite good.
It converges very fast using PCG method.
However,
it is still time-consuming 
if it repeats too many times,
which can happen in iterative solvers.
In addition,
because of this term,
the auxiliary matrix forms
\eqref{dsd_auxiliary_matrix} and \eqref{orig_eig_matrix_aux}
no longer fit into
the solvers that are designed for sparse and explicit matrices.
To deal with this problem,
we give the following theorem.
\begin{theorem}\label{replace_U}
	For a symmetric positive definite (SPD) matrix $ \mcU \in \mbC^{M\times M}$,
	$ \mcB \mcU \mcB^T u = 0 $ if and only if
	$ \mcB \mcM_{k-1}^{-1} \mcB^T u = 0 $
	for  $ u \in \mbC^N $.
\end{theorem}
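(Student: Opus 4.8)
The plan is to reduce the claimed equivalence to a single statement about the kernel of $\mcB$ alone, which is independent of the particular positive definite factor sandwiched between $\mcB^T$ and $\mcB$. Concretely, I would first establish the auxiliary fact that for \emph{any} symmetric positive definite $\mcP \in \mbC^{M\times M}$ and any $u \in \mbC^N$,
\[
	\mcB^T \mcP \mcB u = 0 \iff \mcB u = 0.
\]
The backward implication is immediate. For the forward one, I would pair the identity $\mcB^T \mcP \mcB u = 0$ with $u^*$ on the left. Since the Galerkin matrices $\mcA,\mcB,\mcM_{k-1}$ built from the real finite element bases $\{\vtau_{h,i}\}$ and $\{\vv_{h,i}\}$ are real, $\mcB^T$ coincides with the conjugate transpose of $\mcB$, so $u^*\mcB^T \mcP \mcB u = (\mcB u)^* \mcP (\mcB u)$, which is exactly the quadratic form of $\mcP$ evaluated at $\mcB u$. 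Positive definiteness of $\mcP$ then forces $\mcB u = 0$.

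With this lemma in hand, the theorem follows by applying it twice. The mass matrix $\mcM_{k-1}$ is the Gram matrix of the linearly independent basis $\{\vtau_{h,i}\}_{i=1}^M$ of $V^{k-1}_h$, hence symmetric positive definite; consequently its inverse $\mcM_{k-1}^{-1}$ is symmetric positive definite as well. Taking $\mcP = \mcU$ gives $\mcB^T \mcU \mcB u = 0 \iff \mcB u = 0$, while taking $\mcP = \mcM_{k-1}^{-1}$ gives $\mcB u = 0 \iff \mcB^T \mcM_{k-1}^{-1} \mcB u = 0$. Chaining the two equivalences yields the asserted statement for every $u \in \mbC^N$.

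There is essentially no hard analytic step here; the argument is pure linear algebra resting on the single principle that congruence by an SPD matrix preserves the kernel of $\mcB$. The only point that requires a moment's care is the complex setting: because $u$ may be complex while the matrices are real, I must use the Hermitian pairing $u^*(\cdot)$ rather than $u^T(\cdot)$, so that $(\mcB u)^* \mcP (\mcB u) = 0$ genuinely forces $\mcB u = 0$ — splitting $\mcB u$ into its real and imaginary parts and using the symmetry of $\mcP$ shows the imaginary part of the form cancels and the real part vanishes only at the origin. I would also emphasize, as the practical payoff, that this lemma is precisely what frees the auxiliary scheme from the dense factor $\mcM_{k-1}^{-1}$: any convenient SPD surrogate $\mcU$ (for instance a sparse or diagonal one) produces a matrix $\mcB^T \mcU \mcB$ having the same kernel as $\mcB^T \mcM_{k-1}^{-1} \mcB$, which by \eqref{decomposition_vector} and \eqref{space_decomp} is exactly the structural property on which the auxiliary term relies.
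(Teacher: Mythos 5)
Your proof is correct and follows essentially the same route as the paper's: pair the equation with the vector itself so that positive definiteness of the sandwiched matrix forces $\mcB u = 0$, from which the other identity follows immediately. The only differences are cosmetic --- you package the argument as a general lemma applied twice, and you use the Hermitian pairing $u^*$ where the paper writes $u^T$, which is in fact the correct pairing to draw the conclusion when $u \in \mbC^N$ is genuinely complex.
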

\begin{proof}
	For  $ u \in \mbC^N $,	
	because $ \mcU $ and $ \mcM_{k-1}^{-1} $ are both SPD matrices,
	we have
	\begin{equation}\nonumber 
		\begin{split}
			\mcB \mcU \mcB^T u = 0 &\Longrightarrow u^T \mcB \mcU \mcB^T u = 0 \\
			& \Longrightarrow (\mcB u)^T \mcU (\mcB u) = 0 \\
			& \Longrightarrow \mcB u = 0\\
			& \Longrightarrow \mcB \mcM_{k-1}^{-1} \mcB^T u = 0.
		\end{split}
	\end{equation}
	The inverse is similar.
\end{proof}
This theorem shows that
the image and the kernel of the matrix $ \mcB^T\mcU \mcB $
stay the same with $ \mcB \mcM_{k-1}^{-1} \mcB^T $.
Then the results in Theorem \ref{invariant_matrix}
still hold if we replace $ \mcM_{k-1}^{-1} $
by a SPD matrix $ \mcU $.
The following is a direct corollary of 
Theorem \ref{dd_is_0_matrix_theorem}
with this modification.
\begin{corollary}\label{modified_U}
	$ \mcA  \mcM_{k}^{-1}\mcB \mcU \mcB^T = 0. $
\end{corollary}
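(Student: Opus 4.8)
The plan is to reduce the matrix identity $\mcA\mcM_k^{-1}\mcB^T\mcU\mcB = 0$ to a statement about column spaces and then quote Theorem \ref{dd_is_0_matrix_theorem}. Note that $\mcA\mcM_k^{-1}X = 0$ for a matrix $X\in\mbC^{N\times N}$ holds precisely when every column of $X$ lies in $\ker(\mcA\mcM_k^{-1})$, i.e. when $\operatorname{range}(X)\subseteq\ker(\mcA\mcM_k^{-1})$. Theorem \ref{dd_is_0_matrix_theorem} is exactly the assertion that $\operatorname{range}(\mcB^T\mcM_{k-1}^{-1}\mcB)\subseteq\ker(\mcA\mcM_k^{-1})$. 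Hence the whole corollary follows once I show that $\mcB^T\mcU\mcB$ and $\mcB^T\mcM_{k-1}^{-1}\mcB$ have the \emph{same} column space.

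To establish that range equality I would first recover the kernel identity already produced inside the proof of Theorem \ref{replace_U}: since $\mcU$ and $\mcM_{k-1}^{-1}$ are symmetric positive definite, $\mcB^T\mcU\mcB u = 0 \iff \mcB u = 0 \iff \mcB^T\mcM_{k-1}^{-1}\mcB u = 0$, so that $\ker(\mcB^T\mcU\mcB)=\ker\mcB=\ker(\mcB^T\mcM_{k-1}^{-1}\mcB)$. Rank–nullity on $\mbC^N$ then gives $\operatorname{rank}(\mcB^T\mcU\mcB) = N-\dim\ker\mcB = \operatorname{rank}\mcB^T$, and identically $\operatorname{rank}(\mcB^T\mcM_{k-1}^{-1}\mcB)=\operatorname{rank}\mcB^T$. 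Because both products have the form $\mcB^T(\,\cdot\,)$, their ranges are contained in $\operatorname{range}(\mcB^T)$; matching dimensions then force $\operatorname{range}(\mcB^T\mcU\mcB)=\operatorname{range}(\mcB^T)=\operatorname{range}(\mcB^T\mcM_{k-1}^{-1}\mcB)$. This is precisely the range half of the remark following Theorem \ref{replace_U}, and combining it with the first paragraph closes the argument.

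The only genuinely substantive point is this range equality, and I expect it to be the main (if mild) obstacle: Theorem \ref{replace_U} as stated delivers only the kernel identity, so one must supply the short rank–nullity step to promote it to a range identity. An alternative that sidesteps rank–nullity is to argue through the $\mcM_k$-orthogonal decomposition \eqref{decomposition_vector}: writing $u=u_0+u_1+u_2$, the relations \eqref{space_decomp} force $\mcB u_0 = \mcB u_1 = 0$, whence $\mcB^T\mcU\mcB u = \mcB^T\mcU\mcB u_2$, and one then checks that $\mcM_k^{-1}\mcB^T\mcU\mcB$ maps into $\mbC_2\subseteq\ker\mcA$. That route is more computational and requires the $\mcU$-analogue of \eqref{space_decomp}, so I would favour the column-space argument as the cleaner proof.
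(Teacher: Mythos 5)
Your proof is correct and follows essentially the same route as the paper: establish that $\mcB^T\mcU\mcB$ and $\mcB^T\mcM_{k-1}^{-1}\mcB$ have the same range, then invoke Theorem \ref{dd_is_0_matrix_theorem}. The only difference is that the paper simply cites Theorem \ref{replace_U} for the range equality (which, as you observe, as stated gives only the kernel equivalence, the range statement appearing only in the unproven remark that follows it), whereas you supply the missing rank--nullity step; even more directly, since both matrices are symmetric with a common kernel, their ranges are the orthogonal complement of that kernel and hence coincide.
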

\begin{proof}
	
	As the matrix $ \mcB \mcU \mcB^T $ is Hermitian,
	the space $ \mbC^N $ can be decomposed
	as the orthogonal sum of the  image and kernel of $ \mcB \mcU \mcB^T $,
	i.e.
	$$ \mbC^N = \IM \mcB \mcU \mcB^T \oplus \Ker \mcB \mcU \mcB^T. $$
	Similarly, there is
	$$ \mbC^N = \IM \mcB \mcM_{k-1}^{-1} \mcB^T \oplus \Ker \mcB \mcM_{k-1}^{-1} \mcB^T. $$
	By Theorem \ref{replace_U},
	we have 
	$$ \Ker \mcB \mcU \mcB^T = \Ker \mcB \mcM_{k-1}^{-1} \mcB^T.$$
	Then we obtain
	$$ \IM \mcB \mcU \mcB^T = \IM \mcB \mcM_{k-1}^{-1} \mcB^T,$$
	i.e.
	\begin{equation}\nonumber 
		\begin{split}
			\mcB \mcU \mcB^T \mbC^N = \mcB \mcM_{k-1}^{-1} \mcB^T \mbC^N.
		\end{split}
	\end{equation}
	By Theorem \ref{dd_is_0_matrix_theorem}, we have
	\begin{equation}\nonumber 
		\begin{split}
			\mcA  \mcM_{k}^{-1}\mcB \mcU \mcB^T \mbC^N 
			\equiv\mcA  \mcM_{k}^{-1}\left( \mcB \mcU \mcB^T \mbC^N\right) 
			= \mcA  \mcM_{k}^{-1} \left( \mcB \mcM_{k-1}^{-1} \mcB^T \mbC^N\right)
			\equiv \mcA  \mcM_{k}^{-1} \mcB \mcM_{k-1}^{-1} \mcB^T \mbC^N = \{ 0 \} .
		\end{split}
	\end{equation}
\end{proof}
In \cite{mixed_vector}, 
the $ 1 $-form and $ 2 $-form Hodge Laplacian problems on $ \mbR^3 $ complex
are solved using a multigrid method.
In the auxiliary problems
 \eqref{dsd_auxiliary_matrix}
and \eqref{orig_eig_matrix_aux}, 
if we replace $ \mcM_{k-1}^{-1} $ by a SPD matrix $ \mcU $,
the auxiliary parts still satisfy the property in Corollary \ref{modified_U}.
If the matrix $ \mcU $ is chosen properly and has enough sparsity,
the systems of the auxiliary problems can be sparse enough
and be easier to solve using iterative methods.

\subsection{Auxiliary scheme for the source problem}

We modify the auxiliary equation \eqref{dsd_auxiliary_matrix} as
\begin{equation}\label{dsd_auxiliary_matrix_modified}
	\begin{split}
		\left( \mcA + \mcB \mcU \mcB^T + c \mcM_k\right)  \tilde u = f.
	\end{split}
\end{equation}
We have the following result.
\begin{theorem}
	If $ \tilde u $ is the solution of the auxiliary equation
	\eqref{dsd_auxiliary_matrix_modified},
	then 
	\begin{equation}\label{solution_origi}
		\begin{split}
			u = \tilde u + \frac{1}{c}\mcM_{k}^{-1}\mcB \mcU \mcB^T \tilde u
		\end{split}
	\end{equation}
	is the solution of the original problem $ \mcA u   + c \mcM_k  u = f $.
\end{theorem}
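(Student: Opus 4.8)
The plan is to verify the claim by direct substitution, exactly paralleling the operator-level computation carried out in \eqref{recover_proof} for the first theorem of Section 3.1. The single structural fact that makes the recovery formula work is the matrix identity $\mcA \mcM_k^{-1} \mcB^T \mcU \mcB = 0$ established in Corollary \ref{modified_U}; everything else is bookkeeping. I would also note at the outset that $\mcM_k$ is invertible, being the Gram (mass) matrix of a basis and hence symmetric positive definite, so that the correction term $\frac{1}{c}\mcM_k^{-1}\mcB^T \mcU \mcB \tilde u$ is well defined.

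Concretely, writing $u = \tilde u + \frac{1}{c} \mcM_k^{-1} \mcB^T \mcU \mcB \tilde u$, I would first compute $\mcA u$. Expanding gives $\mcA u = \mcA \tilde u + \frac{1}{c} \mcA \mcM_k^{-1} \mcB^T \mcU \mcB \tilde u$, and Corollary \ref{modified_U} annihilates the second summand, leaving $\mcA u = \mcA \tilde u$. Next I would compute $c \mcM_k u$; here the factor $\mcM_k$ cancels the $\mcM_k^{-1}$ in the correction term and the $c$ cancels the $\frac{1}{c}$, so that $c\mcM_k u = c\mcM_k \tilde u + \mcB^T \mcU \mcB \tilde u$. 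Adding the two results and regrouping yields $\mcA u + c\mcM_k u = (\mcA + \mcB^T \mcU \mcB + c\mcM_k)\tilde u$, which equals $f$ precisely because $\tilde u$ solves the auxiliary scheme \eqref{dsd_auxiliary_matrix_modified}. This is exactly the original problem $\mcA u + c\mcM_k u = f$, as desired.

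There is no genuine obstacle at this level: the argument is a two-line cancellation once Corollary \ref{modified_U} is available. The real content has already been front-loaded into that corollary, which in turn rests on Theorem \ref{dd_is_0_matrix_theorem} (the matrix incarnation of the identity \eqref{ddiszero}) together with the range/kernel equivalence of Theorem \ref{replace_U} that permits replacing $\mcM_{k-1}^{-1}$ by the arbitrary symmetric positive definite matrix $\mcU$. The only point I would take care to state explicitly is the invertibility of $\mcM_k$ mentioned above, since both the definition of the correction term and the cancellation $c\mcM_k \mcM_k^{-1} = cI$ depend on it.
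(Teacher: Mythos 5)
Your proposal is correct and is essentially the paper's own proof read in the opposite direction: the paper inserts the zero term $\frac{1}{c}\mcA\mcM_{k}^{-1}\mcB^T \mcU \mcB \tilde u$ into the auxiliary equation and regroups, while you substitute $u$ into $\mcA u + c\mcM_k u$ and let Corollary \ref{modified_U} annihilate the cross term; both hinge on exactly the same identity and the same cancellations. Your explicit remark that $\mcM_k$ is symmetric positive definite, hence invertible, is a small but legitimate addition that the paper leaves implicit.
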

\begin{proof}
	By Corollary  \ref{modified_U}, 
	we insert a zero term 
	\begin{equation}\nonumber 
		\begin{split}
			\frac{1}{c}\mcA\mcM_{k}^{-1}\mcB\mcU\mcB^T \tilde u \equiv 0
		\end{split}
	\end{equation}
	into \eqref{dsd_auxiliary_matrix_modified}
	and we have
	\begin{equation}\nonumber 
		\begin{split}
			f &=
			\left( \mcA + \mcB \mcU \mcB^T + c \mcM_k\right)  \tilde u \\
			&= \mcA \tilde u
			+ \frac{1}{c}\mcA\mcM_{k}^{-1}\mcB\mcU\mcB^T \tilde u
			+ \mcB\mcU\mcB^T \tilde u
			+ c\mcM_k\tilde u \\
			&= \mcA\left(\tilde u + \frac{1}{c}\mcM_{k}^{-1}\mcB\mcU\mcB^T \tilde u \right) 
			+ c \mcM_{k}\left(\tilde u + \frac{1}{c}\mcM_{k}^{-1}\mcB\mcU\mcB^T \tilde u \right). 
		\end{split}
	\end{equation}
\end{proof}

If there is a proper $ \mcU $,
the distribution of the spectrum of \eqref{dsd_auxiliary_matrix_modified}
can be similar to \eqref{dsd_auxiliary_matrix}, which is Laplace-like.
As $ \mcM_{k-1} $ is a SPD mass matrix and has quite good condition number,
$ \mcU $ can be choose as a positive number in some cases.
The solution \eqref{solution_origi} involves solving a
equation $ \mcM_k^{-1} \left( \mcB \mcU \mcB^T \tilde u\right) $.
As this is a mass equation,
it is  easy to solve using 
many iterative methods, 
even without any preconditioning.
Based the analyses above, 
we summarize the auxiliary matrix scheme for  
\eqref{dsd_matrix} as
\begin{equation}\label{auxiliary_source}
	\boxed{
		\begin{split}
			& \text{ Set a proper $ \mcU $ and choose an iterative} \\ 
			& \text{ method to solve the equation}\\
			& \qqq \left( \mcA  +  \mcB \mcU \mcB^T + c \mcM_k \right) \tilde u = f.\\
			&\text{ Solve the mass equation }\\
			&\qqq \mcM_k v =  \mcB \mcU \mcB^T \tilde u.\\
			& \text{ Then, } u = \tilde u + \frac{1}{c} v
			 \text{  is the solution of} \\
			& \qqq\left( \mcA + c \mcM_k \right) u = f.
		\end{split}
	}
\end{equation}

As we solve the auxiliary problem \eqref{auxiliary_source}
using iterative methods,
the solutions are not mathematically exact,
and they contain numerical errors.
Letting $ \tilde u_a $ and the $ v_a $ be the 
approximate solution in the auxiliary problem
\eqref{auxiliary_source},
we denote their residuals by 
\begin{equation}\nonumber 
	\begin{split}
		r_{aux} &= f - \left( \mcA  +  \mcB \mcU \mcB^T + c \mcM_k \right) \tilde u_a,\\
		r_{mass} &= \mcB \mcU \mcB^T \tilde u_a - \mcM_k v_a.
	\end{split}
\end{equation}
The approximate solution of the original problems is
\begin{equation}\nonumber 
	\begin{split}
		u_a = \tilde u_a + \frac{1}{c} v_a.
	\end{split}
\end{equation}
The residual of $ u_a $ is
	\begin{equation}\label{error_relation}
	\begin{split}
		r_{orig} &= f - \left( \mcA  + c \mcM_k \right) u_a \\
		&\equiv f - \left( \mcA  + c \mcM_k \right) \left( \tilde u_a + \frac{1}{c} v_a \right)  \\
		&= f - \left( \mcA  + c \mcM_k \right)
		\left(  \tilde u_a + \frac{1}{c} \mcM_k^{-1}\mcB \mcU \mcB^T \tilde u_a  - \frac{1}{c}\mcM_k^{-1}r_{mass}\right)\\
		&= f - \left( \mcA  + c \mcM_k \right)
		\left(  \tilde u_a + \frac{1}{c} \mcM_k^{-1}\mcB \mcU \mcB^T \tilde u_a\right)  +\left( \mcA  + c \mcM_k \right)\left(  \frac{1}{c}\mcM_k^{-1}r_{mass}\right)\\
		&= f - \left( \mcA  +  \mcB \mcU \mcB^T + c \mcM_k \right) \tilde u_a
		+\left( \mcA  + c \mcM_k \right)\left(  \frac{1}{c}\mcM_k^{-1}r_{mass}\right)\\
		&= r_{aux} + \left(\frac{1}{c}\mcA \mcM_k^{-1} + 1 \right) r_{mass}.
	\end{split}
\end{equation}
Here, we use the result in Corollary \ref{modified_U}.
From this relation,
we find that if the error of the mass equation is small enough,
the error of the original equation can be close to 
the error of the auxiliary equation \eqref{dsd_auxiliary_matrix_modified}.
Fortunately,
the error of the mass equation can be easily reduced 
using iterative methods
because of its good condition.

\subsection{Auxiliary scheme for the eigenvalue problem}

We modify the auxiliary eigenvalue problem \eqref{orig_eig_matrix_aux} as
\begin{equation}\label{dsd_eig_auxiliary_matrix}
	\begin{split}
		\left( \mcA + \mcB\mcU\mcB^T \right)  u = \lambda_h \mcM_k  u.
	\end{split}
\end{equation}
After we obtain an eigenpair $ \left(\lambda_{h}\, ,\, u \right) $
of the auxiliary scheme \eqref{dsd_eig_auxiliary_matrix},
we recompute the eigenvalue through the original scheme \eqref{orig_eig_matrix}:
\begin{equation}\label{recompute_matrix}
	\begin{split}
		\tilde \lambda_h = \frac{u^T \mcA u}{u^T \mcM_k u}.
	\end{split}
\end{equation}
We decompose the eigenvector $ u $ into three $ \mcM $-orthogonal parts
according to \eqref{decomposition_vector}:
\begin{equation}\nonumber 
	\begin{split}
		u = u^{(0)} + u^{(1)} + u^{(2)},
	\end{split}
\end{equation}
where $ u^{(0)} \in \mbC_0 $,
$ u^{(1)} \in \mbC_1 $
and $ u^{(2)} \in \mbC_2 $.
Similar to the analyses in Section \ref{eig_cases},
the eigenpairs can be divided into the following cases.
\begin{itemize}
	
	\item If $ \lambda_h = 0 $, we know that $ u^{(1)} = u^{(2)} = 0 $
	and $ u = u^{(0)} \in \mbC_0  $.
	
	\item If $ \lambda_h \not = 0 $ and $ \tilde \lambda_h = \lambda_h $,
	we have $ u^{(0)} = u^{(2)} = 0 $
	and $ u = u^{(1)} \in \mbC_1 $.
	
	\item If $ \lambda_h \not = 0 $ and $ \tilde \lambda_h = 0 $,
	we have $ u^{(0)} = u^{(1)} = 0 $
	and $ u = u^{(2)} \in \mbC_2 $. 
	
	\item If $ \lambda_h \not = 0 $ and $ 0< \tilde\lambda_h  < \lambda_h $,
	we have $ u^{(0)} = 0 $, $ u^{(1)} \not= 0 $ and $ u^{(2)} \not= 0 $.
	In this case, $ u = u^{(1)} + u^{(2)} $ 
	and the two components $ u^{(1)} $ and $ u^{(2)} $
	share the same eigenvalue $ \lambda_h $ 
	in the auxiliary problem \eqref{dsd_eig_auxiliary_matrix}.
	The two components can be separated by 
	\begin{equation}\nonumber 
		\begin{split}
			\mcM_k^{-1}\mcA u 
			&\equiv \mcM^{-1}\mcA \left( u^{(1)} + u^{(2)}\right)  
			= \mcM_k^{-1}\mcA u^{(1)} = \lambda_h u^{(1)} \in \mbC_1,\\ 
			\mcM_k^{-1}\mcB\mcU\mcB^T u 
			&\equiv \mcM^{-1}\mcB \mcU \mcB^T \left( u^{(1)} + u^{(2)}\right) 
			= \mcM_k^{-1}\mcB \mcU \mcB^T u^{(2)} = \lambda_h u^{(2)}\in \mbC_2\\
			\textor \qq u^{(2)} 
			&= u - u^{(1)} 
			\equiv u - \frac{1}{\lambda_h} \mcM_k^{-1}\mcA u \in \mbC_2.
		\end{split}
	\end{equation}
	
	\end{itemize}
When involving the signs '$ = $' and '$ < $' for $ \lambda_h $ and $ \tilde \lambda_h $,
numerical errors should be taken into consideration.
These cases are summarized in Table \ref{recognize_eig}.
\begin{table}[ht] 
	\caption{Recognize the type the eigenpair $ \left(\lambda_{h}\, ,\, u \right) $
		of the auxiliary matrix form \eqref{dsd_eig_auxiliary_matrix}
		through its recomputed eigenvalue $ \tilde \lambda_h $. }
	\label{recognize_eig}
	\centering  
	\begin{tabular}{|c|c|c| c| }	
		\hline
		$ \lambda_h = 0  $ & \multicolumn{2}{|c|}{$ u \in \mbC_0 $} & Type 0\\
		\hline
		\multirow{3}*{$ \lambda_h \not = 0  $}
		& $ \tilde \lambda_h = \lambda_h $
		& $ u\in\mbC_1 $  & Type 1\\
		\cline{2-4}
		~ & $ \tilde \lambda_h = 0 $
		&  $ u\in\mbC_2 $ & Type 2\\
		\cline{2-4}
		~ & $ 0< \tilde \lambda_h < \lambda_h $ &
		\tabincell{c}{$ \mcM_k^{-1}\mcA u\in\mbC_1 $  \\ $ \mcM_k^{-1}\mcB\mcU\mcB^T u\in\mbC_2 $} &
		Type 3 \\
		\hline
	\end{tabular}
\end{table}
Finally,
we summarize the auxiliary scheme
for eigenvalue problem into the framework:
\begin{equation}\label{auxiliary_eig}
	\boxed{
		\begin{split}
			& \text{Auxiliary scheme for $\mcA  u = \lambda_h \mcM_k u  $:}\\
			& \text{ Set a proper $ \mcU $ and choose an eigensolver} \\ 
			& \text{  to find
				a set of eigenpairs $ \left\lbrace \left(\lambda_{h,i} \, ,\, u_i \right)  \right\rbrace  $}\\
			& \qqq\left( \mcA + \mcB\mcU\mcB^T \right)  u = \lambda_h \mcM_k u.\\
			&\text{ Recompute the eigenvalues with}\\
			&\qqq \tilde\lambda_{h,i} =(u_i^T \mcA u_i)/(u_i^T \mcM_k u_i).\\
			&\text{ Recognize the type of the eigenpairs by Table \ref{recognize_eig}}.\\
		\end{split}
	}
\end{equation}
Because of the modification for $ \mcM_{k-1}^{-1} $,
we know that the eigenpairs in $ \mbC_2 $
are not the exact nonzero eigenpairs of 
\eqref{dsd_auxiliary_matrix}.
However, 
in usual applications,
we are mainly interested in the
eigenvectors in $ \mbC_0 $ and $ \mbC_1 $.

\section{Two preconditioners}\label{sec_precond}

Preconditioning is an important aspect in iterative methods.
A good preconditioner can make the solver stable and faster.
The Laplace problems are often taken as test examples
when a new preconditioner is proposed.
As the  analyses in the previous sections,
the spectral distribution of the auxiliary schemes
\eqref{auxiliary_source} and \eqref{auxiliary_eig}
are Laplace-like.
The preconditioning techniques for algebraic systems 
can be roughly divided into two families:
based the algebraic systems straightforward
and based on the corresponding continuous problems.
In section, 
we introduce one method for each family.
For the sake of simplicity, we denote
$$  \msA = \mcA + \mcB \mcU \mcB^T + c\mcM_k.$$

\subsection{Incomplete LU factorization and sparse approximate inverse preconditioner}

Incomplete LU (ILU) factorization is a type of preconditioners
for general matrices.
The conventional ILU factors are generated by Gaussian elimination method
on a sparse pattern of the origin matrix.
There are many variations of the conventional ILU
to improve its efficiency and parallelism \cite{MR1990645}.
In our recent paper \cite{IterILU},
we propose an iterative ILU factorization in matrix form,
which provides a parallel way to generate the ILU factors.
In the application of ILU factors in iterative methods,
each iteration involves solving two triangular systems
$ x = U\setminus(L\setminus b) $.
Forward and backward substitution method are effective 
in solving triangular systems,
but they are highly sequential,
which is a restriction in parallelization.
In our another recent paper \cite{SAIT},
we proposed a type of sparse approximate inverse 
for triangular matrices based on Jacobi iteration.
Then the ILU factors $ (L\, ,\,U) $ can be replaced 
by their approximate inverses $ (M_L\, ,\, M_U) $,
and the preconditioning procedure becomes 
two matrix-vector products $ x = M_U(M_L b) $,
which are of fine-grained parallelism.
Furthermore,
if we choose proper parameter,
the number of nonzeros in $ (M_L\, ,\, M_U) $
can be reduced.
We present the generation of 
the sparse aproximate inverse 
preconditioner in Algorithm \ref{SAIT_Thr}.
\begin{algorithm}  
	\caption{Threshold-based Sparse Aproximate Inverse 
		for Triangular matrix $ T $, SAIT\_Thr$ (\tau, m) $}  
	\begin{algorithmic}[1]  
		\STATE	Let $ D $ be the diagonal matrix of the triangular matrix $ T $
		\STATE	Set the initial data $ M_T=I $,
		and let $  T_0 = I-D^{-1}T $.		
		\FOR { $ k = 1, 2, \cdots,m $}
		\STATE  $ M_T =  T_0 M_T + I $
		\STATE  drop the entries in  $ M $ whose magnitudes are small than $ \tau $ $ (\tau<1) $
		\ENDFOR
		\STATE Output $ M_T = M_T D^{-1} $
	\end{algorithmic}  
	\label{SAIT_Thr}
\end{algorithm}

\subsection{Multigrid method}

The multigrid method can be a solver itself
and also can act as preconditioner in other iterative methods.
As it is discussed in the Introduction,
the main difficulty in applying  multigrid method
of the original problem is the kernel of $ \mcA $.
In the auxiliary schemes,
$ \mcB \mcU \mcB^T $ complements this kernel.
For the matrix form 
\begin{equation}\nonumber
	\begin{split}
		\left( \mcA + \mcB \mcM_{k-1}^{-1} \mcB^T + c \mcM_k\right)  \tilde u = f,
	\end{split}
\end{equation}
it is the discrete problem of the equation
\begin{equation}\nonumber
	\begin{split}
		\left( (d^k)^* d^k +  d^{k-1}(d^{k-1})^*\right)  \vu  + c\vu = \vf.
	\end{split}
\end{equation}
We replace $ \mcM_{k-1}^{-1} $ by 
a symmetric positive definite matrix $ \mcU $ in \eqref{auxiliary_eig}.
To coarse meshes to correct the low frequencies on fine meshes,
the spectral distribution of $ \mcU $ and $ \mcM_{k-1}^{-1} $ 
should be similar.
Algorithm \ref{MGVC} is a V-cycle multigrid method.
\begin{algorithm}  
	\caption{Multigrid V-cycle  $ u_l = MGVC_l(f_l\, , \, u_l^0) $}  
	\begin{algorithmic}[1]  
		\STATE	Set proper $ \mcU $ for each $ \msA_l $ 
		$  $ and give an initial guess $ u_l^0 $ for $ u_l $
		\IF{$ l =1 $}
		\STATE	$ u_1 = \msA_1^{-1} f_1 $
		\ELSE
		\STATE Presmoothing: $ u_l = \mcS_l(u_l\, ,\,f_l \, ,\,\msA_l)$
		\STATE Correction on coarse grid:
		\begin{equation}\nonumber 
			\begin{split}
				e_{k-1} &= \mcR_{l-1}(f_l - \msA_l u_l)\\
				e_{k-1} &= MGVC_{l-1}(e_{k-1} \, , \, 0)\\
				u_l &= u_l + \mcP_l e_{l-1}
			\end{split}
		\end{equation}
		\STATE Postsmoothing: $ u_l = \mcS_l(u_l\, ,\,f_l \, ,\,\msA_l)$
		\ENDIF
	\end{algorithmic}  
	\label{MGVC}
\end{algorithm}

\section{Numerical experiments}\label{sec_numerical}

In this section, 
we take  Maxwell and grad-div problems as examples to verify 
the auxiliary schemes we proposed in previous sections.
We compute  their source problems
\begin{align}
	\nabla \times \nabla \times \vu + \vu  &= \vf 
	\qq \text{in} \;\; \Omega, 
	\qqq \vn \times \left(\nabla \times \vu\right)  = 0 
	\qq\texton \;\; \partial\Omega, \label{Maxwell_source}\\
	-\nabla(\nabla \cdot \vu) + \vu &= \vf 
	\qq \text{in} \;\; \Omega, 
	\qq\;\;\qqq \vn \left( \nabla \cdot \vu\right)  = 0
	\qq\texton \;\; \partial\Omega,\label{graddiv_source}
\end{align}
and their eigenvalue problems
\begin{align}
	\nabla \times \nabla \times \vu &= \lambda \vu
	\qq \text{in} \;\; \Omega, 
	\qqq \vn \times \left(\nabla \times \vu\right)  = 0 
	\qq\texton \;\; \partial\Omega, \label{Maxwell_eigenvalue}\\
	-\nabla(\nabla \cdot \vu)  &= \lambda \vu
	\qq \text{in} \;\; \Omega, 
	\qq\;\;\qqq \vn \left( \nabla \cdot \vu\right)  = 0
	\qq\texton \;\; \partial\Omega.\label{graddiv_eigenvalue}
\end{align}
The two operators are the  $ k=1 $ and $ k=2 $ forms of the $ d^* d $ operator 
on the $ \mbR^3 $ complex, respectively:
\begin{equation}\nonumber 
	\begin{split}
		\xymatrix{
			0 \ar[r] 
			& H^1(\Omega) \ar[r]^{\nabla}        \ar[d]^{\Pi^Q_h}
			& \vH(\textcurl,\Omega) \ar[r]^{\nabla \times} \ar[d]^{\Pi^\vE_h}
			& \vH(\textdive,\Omega) \ar[r]^{\nabla \cdot}  \ar[d]^{\Pi^\vF_h}
			& L^2(\Omega) \ar[r]                  \ar[d]^{\Pi^S_h}
			& 0\\
			0\ar[r] 
			& Q_h \ar[r]^{\nabla} 
			& \vE_h \ar[r]^{\nabla \times} 
			& \vF_h \ar[r]^{\nabla \cdot}	
			& S_h \ar[r]
			& 0.
		}
	\end{split}
\end{equation}
We use the first order of the first family of \Nedelec elements \cite{nedelec1980mixed}
to discrete these problems
and construct their auxiliary schemes.
The node element space $ Q_h $ 
and the edge element space $ \vE_h $ 
are used to construct the auxiliary schemes for the Maxwell problems,
while $ \vE_h $ and the face element space $ \vF_h $ 
are used for the grad-div problems.

We compute these problems on two three-dimensional domains,
the first one is a cube $ [0,\pi]^3 $
and the second one is also a cube but with a tunnel inside
$ [0,\pi]^3 \setminus [\pi/4, 3\pi/4]^2 \times [0,\pi]$,
shown in Figure \ref{Domain}.
\begin{figure}
	\centering
	\begin{minipage}{6cm}
		\includegraphics[height=4cm,width=5.5cm]{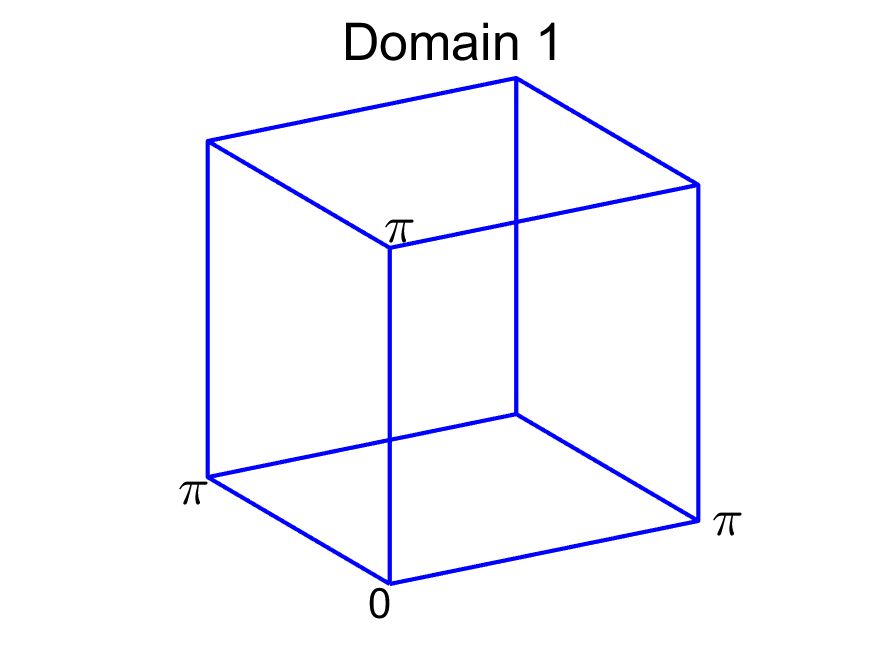}
	\end{minipage}
	\begin{minipage}{6cm}
		\includegraphics[height=4cm,width=5.5cm]{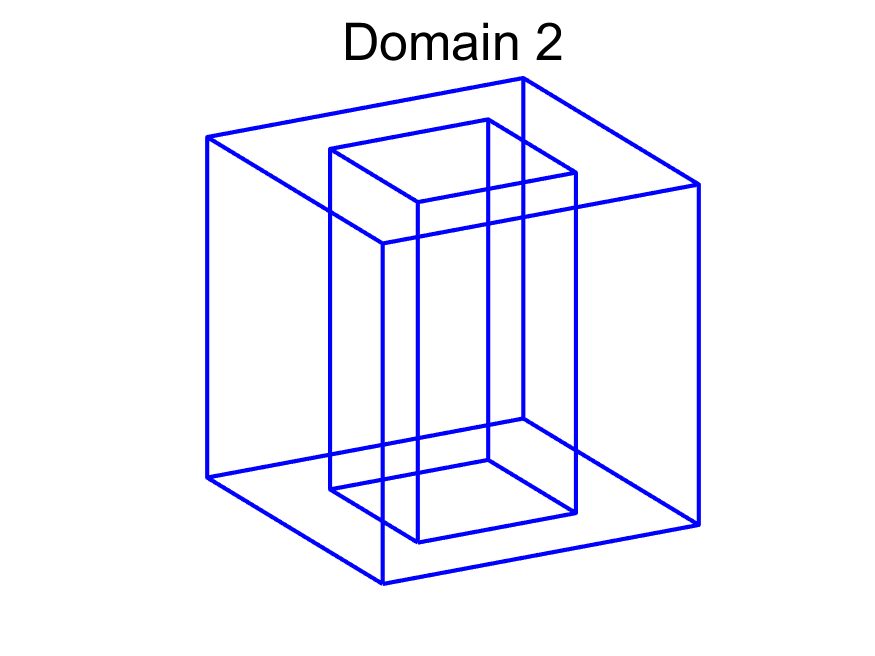}
	\end{minipage}
	\caption{Left: a cubic domain $ [0,\pi]^3 $.
		Right: a cubic domain with a hole $ [0,\pi]^3 \setminus [\pi/4, 3\pi/4]^2 \times [0,\pi]$.}
	\label{Domain}
\end{figure}
We use 5 levels of uniform cubic meshes
and their mesh sizes are
$$ h = \frac{\pi}{4}, \frac{\pi}{8},\frac{\pi}{16},\frac{\pi}{32},\frac{\pi}{64}.$$
For each mesh,
the $ \mcU $ in the auxiliary parts
is set to a diagonal matrix
\begin{equation}\label{U_setting}
	\begin{split}
		\mcU = \frac{5}{h^3} \mcI,
	\end{split}
\end{equation} 
where $ h $ is the mesh size and $ \mcI $ is the identity matrix.

The numerical experiments involve many cases 
including
two different operators,
both source and eigenvalue problems,
two different domains.
Moreover,
the multigrid method acts 
both as an independent solver itself
and a preconditioner in iterative solvers.
To guarantee all these cases to converge,
we choose symmetric Gauss-Seidel method as the smoother 
and set the number of smoothing iteration to $ 5 $ uniformly,
even though the these settings are not optimal for every case.
We generate the SAIT preconditioner based on 
the triangular factors of the standard ILU(0) factorization.
We also take a uniform setting for the parameters in Algorithm \ref{SAIT_Thr},
say SAIT$(0.05, 10)$.

We use the 
Locally Optimal Block Preconditioned Conjugate Gradient method (LOBPCG) \cite{knyazev2001toward}
as the eigensolver to compute 
the first $ 20 $ eigenvalues of the auxiliary eigenvalue problems.
We compute $ 25 $ eigenvalues in actual computations
to make the solver stabler.
When the errors of first $ 20 $ numerical eigenvalue reach the error criterion,
we stop the iteration.

The errors of the linear equations and generalized matrix eigenvalue problems
are set as
\begin{equation}\nonumber 
	\begin{split}
		\frac{\nm{f - \msA u}}{\nm{ f }}
		\qq \text{and} \qq
		\frac{\nm{\msA u - \lambda \mcM_k u}_{\mcM_k}}{\nm{u}_{\mcM_k}},
	\end{split}
\end{equation} 
respectively.
Here,  $ \nm{\cdot}_{\mcM_k} $ denotes the norm with respect to $ \mcM_k $,
that is
\begin{equation}\nonumber 
	\begin{split}
		\nm{v}_{\mcM_k} = \sqrt{v^{T}\mcM_k v}.
	\end{split}
\end{equation} 
The stop criterion is set to $ 10^{-8} $
for all the numerical experiments.


\subsection{The Maxwell problems}

The discrete weak form of the Maxwell source problem \eqref{Maxwell_source} is:
given $ \vf\in \vL^2(\Omega) $, find $ \vu_h \in \vE_h $ such that
\begin{equation}\label{weak_discrete_Maxwell}
	\begin{split}
		\left\langle \nabla \times \vu_h \, ,\,  \nabla \times \vv_h \right\rangle 
		+ c \left\langle \vu_h \, ,\,  \vv_h \right\rangle 
		= \left\langle \vf \, ,\,  \vv_h \right\rangle \qq \vv_h \in \vE_h.
	\end{split}
\end{equation}
The corresponding auxiliary weak form is:
given $ \vf \in \vL^2(\Omega) $, 
find  $ (\vsigma_h, \tilde\vu_h)\in Q_h \times \vE_h $
such that
\begin{equation}\label{weak_discrete_Maxwell_mixed}
	\begin{split}
		\left\langle \sigma_h \, ,\,  \tau_h \right\rangle 
		- \left\langle \tilde\vu_h \, ,\,  \nabla\tau_h \right\rangle 
		&=0  \qqq  \qqq \;\, \tau_h \in Q_h,\\
		\left\langle \nabla\sigma_h \, ,\,  \vv_h \right\rangle 
		+ \left\langle \nabla \times \tilde \vu_h \, ,\,  \nabla \times \vv_h \right\rangle 
		+ \left\langle \tilde\vu_h \, ,\,  \vv_h \right\rangle 
		&=  \left\langle \vf \, ,\,  \vv_h \right\rangle \;\,\qq \vv_h \in \vE_h.
	\end{split}
\end{equation}
The discrete form of Maxwell eigenvalue problem \eqref{Maxwell_eigenvalue} is:
find $ (\lambda_h,\vu_h) \in \mbR \times \vE_h $ such that
\begin{equation}\label{weak_discrete_Maxwell_eig}
	\begin{split}
		\left\langle \nabla \times \vu_h \, ,\,  \nabla \times \vv_h \right\rangle 
		= \lambda_h \left\langle \vu_h \, ,\,  \vv_h \right\rangle \qq \vv_h \in \vE_h.
	\end{split}
\end{equation}
Its auxiliary problem in weak form is:
find $ (\lambda_h,\vu_h) \in \mbR \times \vE_h $ such that
\begin{equation}\label{weak_discrete_Maxwell_eig_mixed}
	\begin{split}
		\left\langle \sigma_h \, ,\,  \tau_h \right\rangle 
		- \left\langle \vu_h \, ,\,  \nabla\tau_h \right\rangle 
		&=0  \qqq  \qqq \qqq\;\, \tau_h \in Q_h,\\
		\left\langle \nabla\sigma_h \, ,\,  \vv_h \right\rangle 
		+ \left\langle \nabla \times  \vu_h \, ,\,  \nabla \times \vv_h \right\rangle  
		&=  \lambda_h \left\langle \vu_h \, ,\;  \vv_h \right\rangle \;\,\qq \vv_h \in \vE_h.
	\end{split}
\end{equation}
We use the weak forms \eqref{weak_discrete_Maxwell_mixed} and
\eqref{weak_discrete_Maxwell_eig_mixed}
to construct the corresponding auxiliary matrix problems of 
\eqref{auxiliary_source} and \eqref{auxiliary_eig}.
The mass matrix generated by
$ \left\langle \sigma_h \, ,\,  \tau_h \right\rangle $ 
in the above auxiliary forms
is replace by $ \mcU $ as \eqref{U_setting}.

We test the auxiliary schemes in solving the
discrete Maxwell source problem \eqref{weak_discrete_Maxwell} 
on the mesh of level 5 ($ h = \pi/64 $).
The sizes of the discrete systems of 
Domain $ 1 $ and Domain $ 2 $
are $ 811,200 $ and $ 620,736 $, respectively.
We use  Algorithm \ref{SAIT_Thr} to generate 
the SAIT preconditioners based on the standard ILU(0) factorization.
Figure \ref{SAIT_Maxwell} shows the comparisons of 
the nonzeros in the lower triangular factor $ L $
and its corresponding sparse approximate inverse matrix $ M_L $
generated by SAIT$(0.05, m)$.
Here $ m $ is the iteration number.
The results show that the nonzeros on the SAIT$(0.05, 10)$ matrix 
is much less than its original matrix.
\begin{figure}
	\centering
	\begin{minipage}{6cm}
		\includegraphics[height=5cm,width=6.5cm]{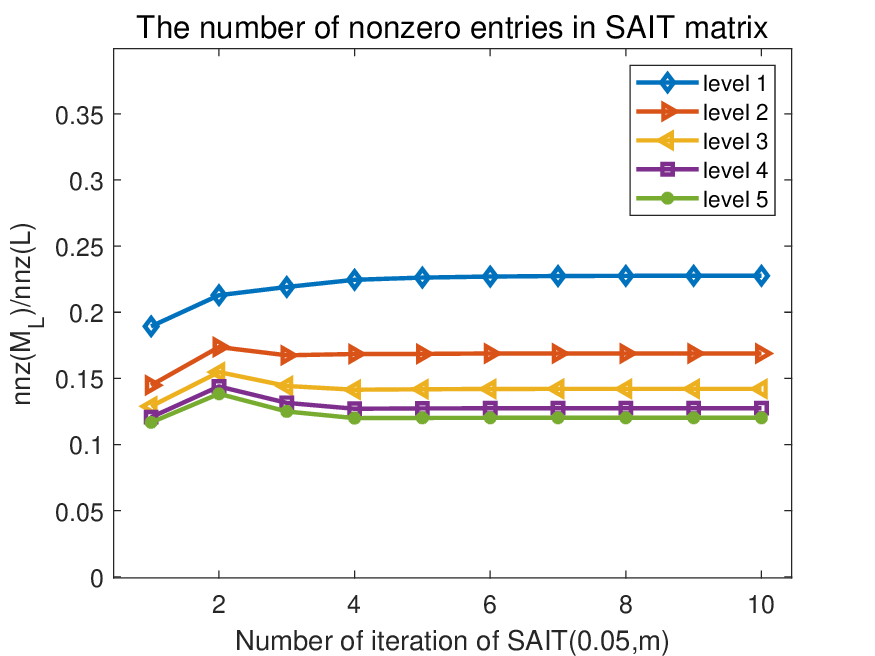}
	\end{minipage}
	\begin{minipage}{6cm}
		\includegraphics[height=5cm,width=6.5cm]{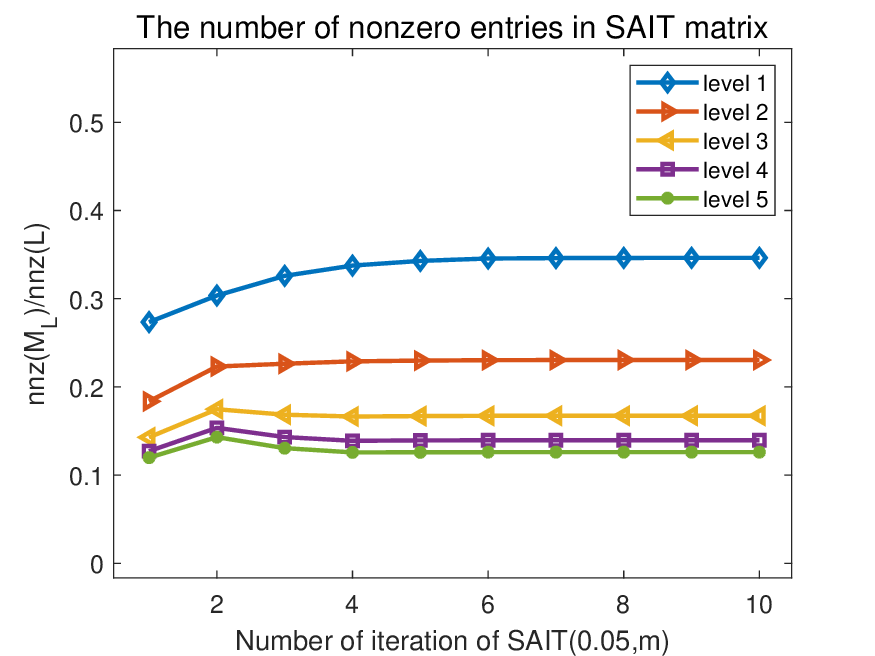}
	\end{minipage}
	\caption{The number of the nonzero entries in the SAIT matrices 
		for the auxiliary discrete Maxwell source problems.
		Left: Domain 1. Right: Domain 2. }
	\label{SAIT_Maxwell}
\end{figure}

Figure \ref{Maxwell_L5} shows the results in solving 
the original discrete Maxwell equation and its auxiliary problem
using the PCG method on the mesh of level 5.
With the auxiliary part, 
the convergence becomes stable and faster,
and can be accelerated by ILU and SAIT preconditioners further.
\begin{figure}
	\centering
	\begin{minipage}{6cm}
		\includegraphics[height=5cm,width=6.5cm]{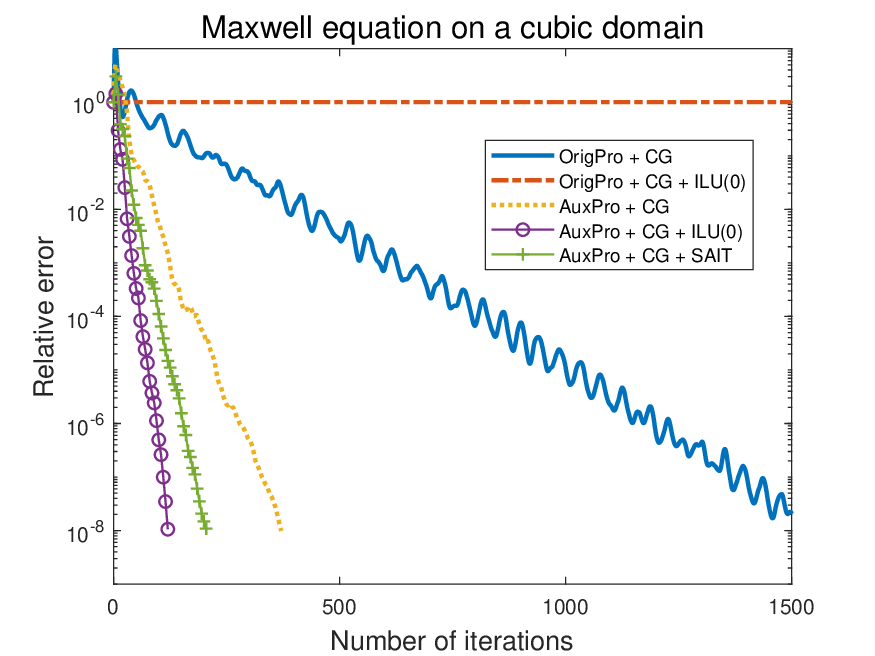}
	\end{minipage}
	\begin{minipage}{6cm}
		\includegraphics[height=5cm,width=6.5cm]{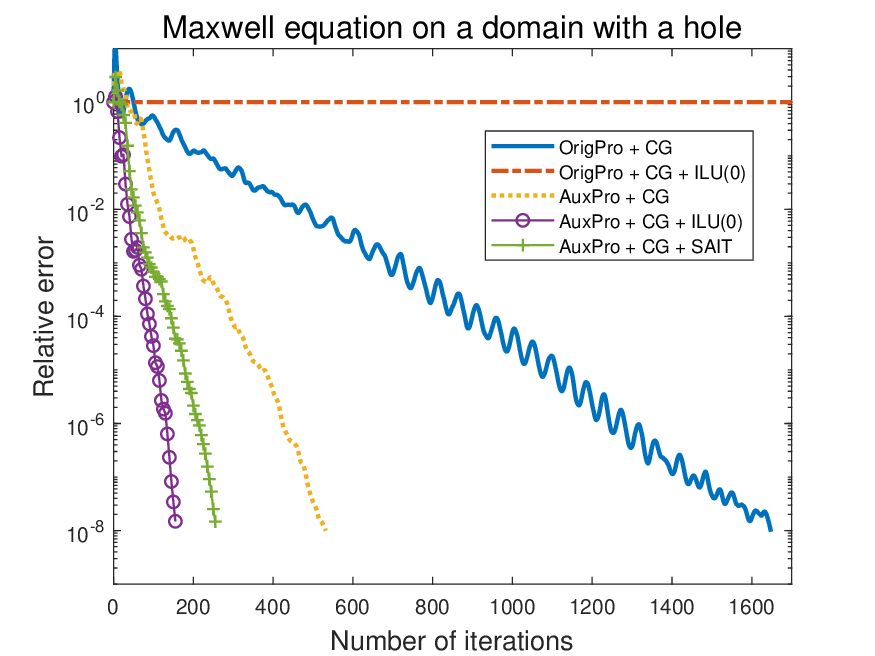}
	\end{minipage}
	\caption{The convergence histories of the original discrete Maxwell equation 
		and its auxiliary scheme with different iterative methods. 
		The mesh is level 5 with the mesh size $ h = \pi/64 $.
		Left: Domain 1. Rigth: Domain 2.}
	\label{Maxwell_L5}
\end{figure}
After getting the solution of the auxiliary problem, 
it needs to solve a mass equation.
Figure \ref{mass_Maxwell} shows the relative errors of the mass equation
and the original discrete Maxwell problem.
The error of the solution of the original problem 
stops going down after some iterations,
which can be explained by 
the relation of the residuals \eqref{error_relation}.
The final solution contains the error in  
the solution of the auxiliary problem.
The figures also show that
the relative error of the mass equation is smaller than 
that of the original equation.
This means that
the error of the mass euqation 
should be set smaller in actual computations.
\begin{figure}
	\centering
	\begin{minipage}{6cm}
		\includegraphics[height=5cm,width=6.5cm]{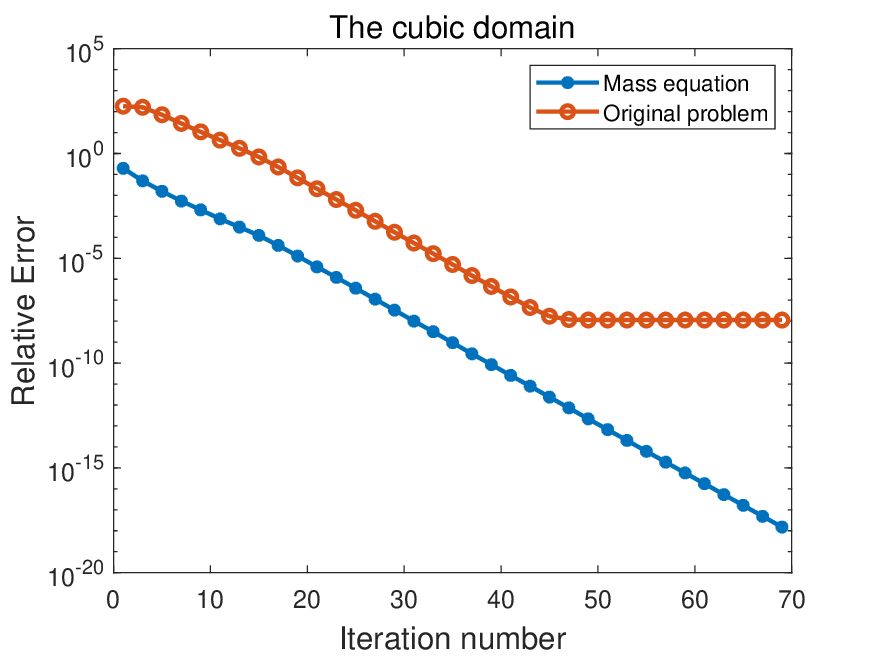}
	\end{minipage}
	\begin{minipage}{6cm}
		\includegraphics[height=5cm,width=6.5cm]{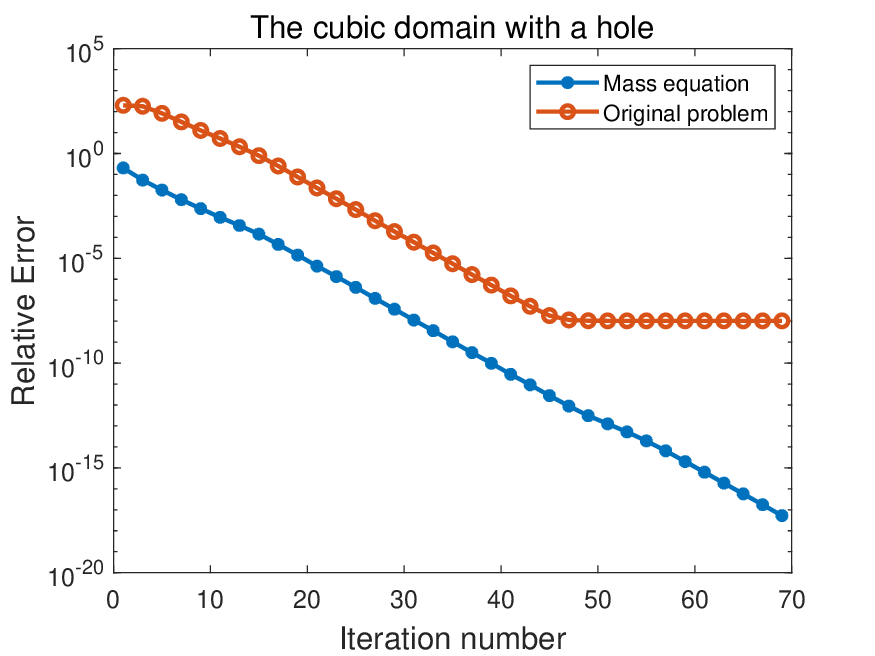}
	\end{minipage}
	\caption{The convergence histories of the mass equation.
		The relative error is computed in two ways: by the mass equation
		and by the original discrete Maxwell equation.
		Left: Domain 1. Rigth: Domain 2.}
	\label{mass_Maxwell}
\end{figure}

Table \ref{Table_Maxwell_Domain_1} and \ref{Table_Maxwell_Domain_2} 
show more convergence results 
and their comparisons
of the discrete Maxwell source problems
on the two domains.
The V-cycle multigrid method works both as an independent solver
or a preconditioner in the PCG method.
It can reduce the iteration counts significantly.
The mass equations on different meshes converge fast even without preconditioning.
The ILU(0) preconditioner can make them converge to the stop criterion within several iterations.
\begin{table}[ht]  
	\footnotesize	
	\caption{ The iteration counts of the original discrete Maxwell equation,
		its auxiliary equation and the mass equation with different iterative methods
		and preconditioners on Domain 1.}
	\label{Table_Maxwell_Domain_1}
	\centering  
	\begin{tabular}{|c|c|c||c|c||c|c|c|c|c||c|c|}	
		\hline
		\multicolumn{3}{|c||}{ } &
		\multicolumn{2}{|c||}{ $ (\mcA + c\mcM_k)u = f $ } &
		\multicolumn{5}{|c||}{ $ (\mcA + \mcB \mcU \mcB^T + c\mcM_k) \tilde u = f $ } &
		\multicolumn{2}{|c|}{ $ \mcM_k v =  \mcB \mcU \mcB^T \tilde u $ } \\
		\hline
		\multirow{2}*{Level} & \multirow{2}*{$ h $} & \multirow{2}*{d.o.f.} &
		\multicolumn{2}{|c||}{\tabincell{c}{Conjugate \\ Gradient}  } &
		\multicolumn{4}{|c|}{ Conjugate  Gradient } & \multirow{2}*{ \tabincell{c}{Multigrid \\ \tiny{MGVC$(l,5)$}} } &
		\multicolumn{2}{|c|}{ \tabincell{c}{Conjugate \\ Gradient}  }\\
		\cline{4-9} \cline{11-12}
		~ & ~ & ~ & $ -- $ & ILU(0) &
		$ -- $ & ILU(0) & SAIT & \tiny{MGVC$(l,5)$} & ~ & $ -- $ & ILU(0) \\
		\hline
		$ l =  1 $ & $ \pi/4 $ & $ 300 $ & $ 96 $  & $ 19 $  &
		$ 64 $ & $ 7 $ & $ 13 $ &   &   &
		$ 15 $ &  $ 3 $ \\
		\hline
		$ l =  2 $ & $ \pi/8 $ & $ 1994 $ & $ 208 $  & $ 2233 $  &
		$ 84 $ & $ 14 $ & $ 23 $ & $ 8 $ & $ 5 $ &
		$ 28 $ &  $ 3 $ \\
		\hline
		$ l =  3 $ & $ \pi/16 $ & $ 13,872 $ & $ 395 $  & $ \infty $  &
		$ 113 $ & $ 29 $ & $ 48 $ & $ 10 $ & $ 7 $ &
		$ 32 $ &  $ 3 $ \\
		\hline
		$ l =  4 $ & $ \pi/32 $ & $ 104,544 $ & $ 769 $  & $ \infty $  &
		$ 198 $ & $ 60 $ & $ 96 $ & $ 12 $ & $ 10 $ &
		$ 33 $ &  $ 3 $ \\
		\hline
		$ l =  5 $ & $ \pi/64 $ & $ 811,200 $ & $ 1544 $  & $ \infty $  &
		$ 372 $ & $ 122 $ & $ 191 $ & $ 15 $ & $ 14 $ &
		$ 33 $ &  $ 3 $ \\
		\hline
	\end{tabular}
	
\end{table}

\begin{table}[ht]  
	\caption{ The iteration counts of the original discrete Maxwell equation,
		its auxiliary equation and the mass equation with different iterative methods
		and preconditioners on Domain 2. }
	\label{Table_Maxwell_Domain_2}
	\footnotesize	
	\centering  
	\begin{tabular}{|c|c|c||c|c||c|c|c|c|c||c|c|}	
		\hline
		\multicolumn{3}{|c||}{ } &
		\multicolumn{2}{|c||}{ $ (\mcA + c\mcM_k) \tilde u = f $ } &
		\multicolumn{5}{|c||}{ $ (\mcA + \mcB \mcU \mcB^T + c\mcM_k) u = f $ } &
		\multicolumn{2}{|c|}{ $ \mcM_k v =  \mcB \mcU \mcB^T \tilde u $ } \\
		\hline
		\multirow{2}*{Level} & \multirow{2}*{$ h $} & \multirow{2}*{d.o.f.} &
		\multicolumn{2}{|c||}{\tabincell{c}{Conjugate \\ Gradient}  } &
		\multicolumn{4}{|c|}{ Conjugate  Gradient } & \multirow{2}*{ \tabincell{c}{Multigrid \\ \tiny{MGVC$(l,5)$}} } &
		\multicolumn{2}{|c|}{ \tabincell{c}{Conjugate \\ Gradient}  }\\
		\cline{4-9} \cline{11-12}
		~ & ~ & ~ & $ -- $ & ILU(0) &
		$ -- $ & ILU(0) & SAIT & \tiny{MGVC$(l,5)$} & ~ & $ -- $ & ILU(0) \\
		\hline
		$ l =  1 $ & $ \pi/4 $ & $ 276 $ & $ 106 $  & $ 20 $  &
		$ 64 $ & $ 9 $ & $ 11 $ &   &   &
		$ 28 $ &  $ 6 $ \\
		\hline
		$ l =  2 $ & $ \pi/8 $ & $ 1,656 $ & $ 219 $  & $ 136 $  &
		$ 93 $ & $ 16 $ & $ 30 $ & $ 5 $ & $ 6 $ &
		$ 35 $ &  $ 7 $ \\
		\hline
		$ l =  3 $ & $ \pi/16 $ & $ 11,184 $ & $ 416 $  & $ \infty $  &
		$ 158 $ & $ 36 $ & $ 57 $ & $ 6 $ & $ 7 $ &
		$ 36 $ &  $ 3 $ \\
		\hline
		$ l =  4 $ & $ \pi/32 $ & $ 81,504 $ & $ 820 $  & $ \infty $  &
		$ 288 $ & $ 78 $ & $ 115 $ & $ 9 $ & $ 11 $ &
		$ 34 $ &  $ 5 $ \\
		\hline
		$ l =  5 $ & $ \pi/64 $ & $ 620,736 $ & $ 1643 $  & $ \infty $  &
		$ 321 $ & $ 157 $ & $ 235 $ & $ 12 $ & $ 16 $ &
		$ 33 $ &  $ 5 $ \\
		\hline
	\end{tabular}
	
\end{table}

We compute the auxiliary Maxwell eigenvalue problem using LOBPCG method 
with different preconditioners.
Figure \ref{LOBPCG_Maxwell} shows 
the convergence histories of the first 20 eigenvalues
on Domain 1 with the mesh of level 5 ($ h = \pi/64 $).
All of the ILU(0), SAIT$(0.05, 10)$ and MGVC$(l,5)$ 
preconditioners can efficiently reduce
the iteration counts.
After getting the eigenpairs of the auxiliary problems,
we recompute the numerical eigenvalues using 
the original eigenvalue problem by \eqref{recompute_matrix}.
Table \ref{recompute_Domain_1} shows that 
the desired smallest nonzero eigenvalues on Domain 1
can be easily identified through recomputing.
Table \ref{recompute_Domain_2} show the eigenvalues on Domain 2.
As there is a tunnel in this domain,
the Betti number is $ 1 $.
The first eigenpair in this auxiliary eigenvalue problem
corresponds to the only one harmonic form in the cohomology space.
Table \ref{Maxwell_eig_domain_1} and \ref{Maxwell_eig_domain_2} 
show the convergences of the auxiliary eigenvalue problems
with different preconditioner on the two domains.
\begin{figure}
	\centering
	\begin{minipage}{6cm}
		\includegraphics[height=5cm,width=6.5cm]{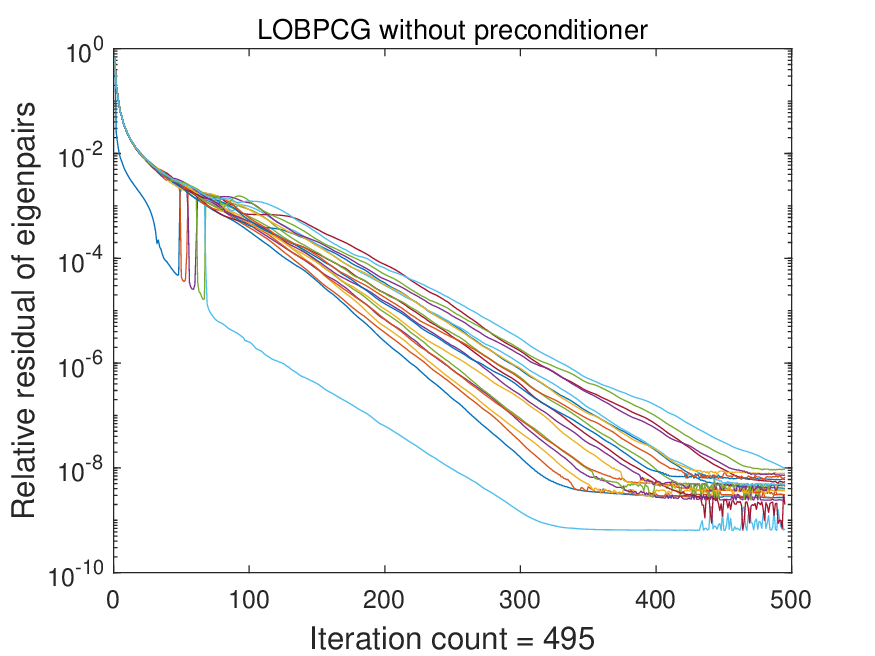}
	\end{minipage}
	\begin{minipage}{6cm}
		\includegraphics[height=5cm,width=6.5cm]{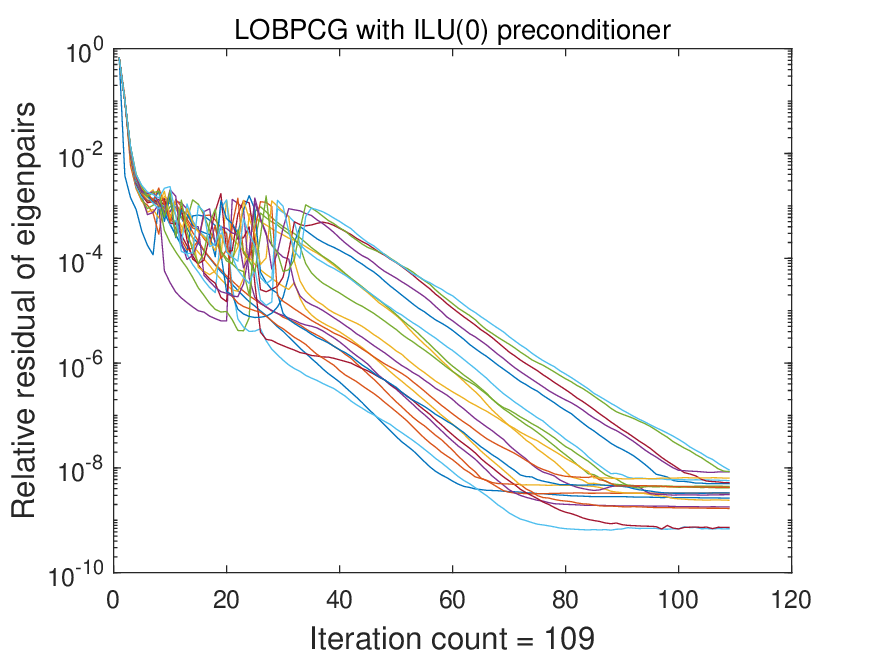}
	\end{minipage}
	
	\begin{minipage}{6cm}
		\includegraphics[height=5cm,width=6.5cm]{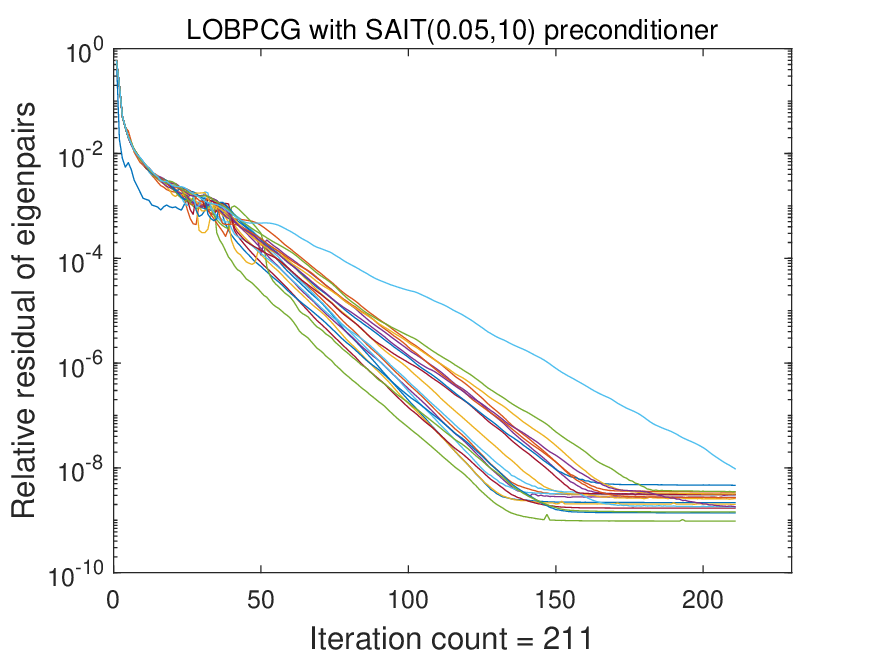}
	\end{minipage}
	\begin{minipage}{6cm}
		\includegraphics[height=5cm,width=6.5cm]{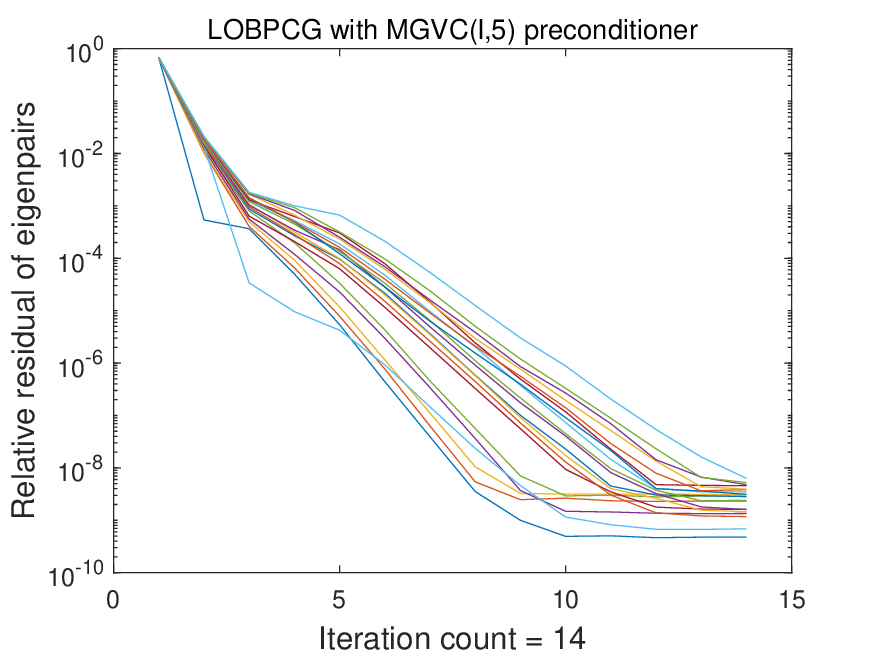}
	\end{minipage}
	\caption{Compute the first 20 eigenvalue of the auxiliary discrete Maxwell operator
		on Domain 1 with the mesh of level 5 ($ h = \frac{\pi}{64} $)
		using LOBPCG method with ILU(0), SAIT$ (0.05,10) $, Multigrid preconditioner, respectively.}
	\label{LOBPCG_Maxwell}
\end{figure}

\begin{table}[ht]  
	\caption{Recompute the numerical eigenvalues using 
		the original discrete Maxwell eigenproblem
		and its auxiliary eigenproblem on Domain 1, respectively.}
\label{recompute_Domain_1}
	\footnotesize
	\centering  
	\begin{tabular}{|c|c|c|c|c| }	
		\hline
		Exact	&  $\left( \mcA + \mcB \mcU  \mcB^T \right)  u = \lambda_h \mcM_k u$
		& $ \mcA u = \lambda_h \mcM_k u$ 
		&  $ \mcB^T \mcU  \mcB u = \lambda_h \mcM_k u$ & Type\\
		\hline	
		2	&   2.000401   &   2.000401   &   2.781$\times 10^{-14}$ & Type 1 \\
		2	&	2.000401   &   2.000401   &   5.524$\times 10^{-13}$ & Type 1\\
		2	&	2.000401   &   2.000401   &   1.081$\times 10^{-12}$ & Type 1\\
		3	&	3.000602   &   3.000602   &   1.859$\times 10^{-13}$ & Type 1\\
		3	&	3.000602   &   3.000602   &   8.440$\times 10^{-13}$ & Type 1\\
		&	4.696564   &   1.112$\times 10^{-12}$  &   4.696564 & Type 2\\
		&	4.696564   &   2.115$\times 10^{-12}$  &   4.696564 & Type 2\\
		&	4.696564   &   5.233$\times 10^{-12}$  &   4.696564 & Type 2\\
		5	&	5.003414   &   5.003414   &   9.978$\times 10^{-14}$ & Type 1\\
		5	&	5.003414   &   5.003414   &   1.792$\times 10^{-13}$ & Type 1\\
		5	&	5.003414   &   5.003414   &   1.851$\times 10^{-13}$ & Type 1\\
		5	&	5.003414   &   5.003414   &   3.573$\times 10^{-13}$ & Type 1\\
		5	&	5.003414   &   5.003414   &   1.213$\times 10^{-12}$ & Type 1\\
		5	&	5.003414   &   5.003414   &   2.020$\times 10^{-12}$ & Type 1\\
		6	&	6.003615   &   6.003615   &   2.552$\times 10^{-12}$ & Type 1\\
		6	&	6.003615   &   6.003615   &   1.438$\times 10^{-12}$ & Type 1\\
		6	&	6.003615   &   6.003615   &   1.405$\times 10^{-12}$ & Type 1\\
		6	&	6.003615   &   6.003615   &   1.995$\times 10^{-12}$ & Type 1\\
		6	&	6.003615   &   6.003615   &   2.526$\times 10^{-12}$ & Type 1\\
		6	&	6.003615   &   6.003615   &   3.177$\times 10^{-12}$ & Type 1\\
		\hline		
	\end{tabular}
\end{table}
\begin{table}[ht]  
\caption{Recompute the numerical eigenvalues using 
	the original discrete Maxwell eigenproblem
	and its auxiliary eigenproblem on Domain 2, respectively.
    The first eigenfunction is the harmonic form on this domain.}
\label{recompute_Domain_2}
	\footnotesize	
	\centering  
	\begin{tabular}{|c|c|c|c| }	
		\hline
		$\left( \mcA + \mcB \mcU  \mcB^T \right)  u = \lambda_h \mcM_k u$
		& $ \mcA u = \lambda_h \mcM_k u$ 
		&  $ \mcB \mcU \mcB^T u = \lambda_h \mcM_k u$ & Type\\
		\hline	
		5.247$\times 10^{-12}$  &   4.816$\times 10^{-12}$  &     4.970$\times 10^{-13}$ & Type 0\\
		1.000200  &     1.000200   &    9.858$\times 10^{-13}$ & Type 1\\
		1.513696  &     1.513696   &    5.930$\times 10^{-13}$ & Type 1\\
		1.513696  &     1.513696   &    7.567$\times 10^{-13}$ & Type 1\\
		2.375882  &     1.761$\times 10^{-12}$  &     2.375882 & Type 2\\
		2.375882  &     7.735$\times 10^{-12}$  &     2.375882 & Type 2\\
		2.689091  &     2.689091   &   2.327$\times 10^{-12}$  & Type 1\\
		3.400482  &     3.400482   &    4.662$\times 10^{-13}$ & Type 1\\
		4.003213  &     4.003213   &    5.860$\times 10^{-13}$ & Type 1\\
		4.516709  &     4.516709   &    5.356$\times 10^{-13}$ & Type 1\\
		4.516709  &     4.516709   &     2.487$\times 10^{-12}$& Type 1\\
		4.560227  &     3.255$\times 10^{-12}$  &     4.560227 & Type 2\\
		5.230621  &     5.230621   &    5.764$\times 10^{-13}$ & Type 1\\
		5.230621  &     5.230621   &    1.424$\times 10^{-12}$ & Type 1\\
		5.692104  &     5.692104   &     1.917$\times 10^{-13}$ & Type 1\\
		6.403495  &     6.403495   &    6.314$\times 10^{-13}$ & Type 1\\
		6.734557  &     6.734557   &     4.218$\times 10^{-12}$ & Type 1\\
		6.891325  &     2.839$\times 10^{-11}$  &     6.891325 & Type 2\\
		6.891325  &     9.671$\times 10^{-11}$  &    6.891325 & Type 2\\
		7.760096  &     6.702$\times 10^{-11}$  &     7.760096 & Type 2\\
		\hline		
	\end{tabular}
	
\end{table}
\begin{table}[ht]  
\caption{ The iteration counts in computing the first 20 eigenvalues 
	of the auxiliary Maxwell eigenproblem using different preconditioners
	and different mesh sizes on Domain 1.}
\label{Maxwell_eig_domain_1}
	\centering  
	\begin{tabular}{|c| c| c |c| c |c|c|}
		\hline	
		\multirow{2}*{Level} & \multirow{2}*{$ h $} & \multirow{2}*{d.o.f.} &
		\multicolumn{4}{|c|}{ LOBPCG with preconditioners  } \\
		\cline{4-7}
		~ & ~ & ~ & --- & ILU(0) & SAIT(0.05,10) & MGVC(l,5) \\
		\hline
		$ l =  1 $ & $ \pi/4 $ & $ 300 $ &
		$ 90 $  & $ 19 $  & $ 23 $ & $ 18 $ \\
		\hline
		$ l =  2 $ & $ \pi/8 $ & $ 1,944 $ & 	
		$ 131 $  & $ 17 $  & $ 29 $ & $ 17 $ \\
		\hline
		$ l =  3 $ & $ \pi/16 $ & $ 13,872 $ & 
		$ 182 $  & $ 26 $  & $ 54 $ & $ 16 $ \\
		\hline
		$ l =  4 $ & $ \pi/32 $ & $ 104,544 $ &
		$ 281 $  & $ 56 $  & $ 107 $ & $ 15 $ \\
		\hline
		$ l =  5 $ & $ \pi/64 $ & $ 811,200 $ & 
		$ 642 $  & $ 109 $  & $ 211 $ & $ 14 $ \\
		\hline
	\end{tabular}
	
\end{table}
\begin{table}[ht]  
	\caption{ The iteration counts in computing the first 20 eigenvalues 
		of the auxiliary Maxwell eigenproblem using different preconditioners
		and different mesh sizes on Domain 2.}
	\label{Maxwell_eig_domain_2}
	\centering  
	\begin{tabular}{|c| c| c |c| c |c|c|}
		\hline	
		\multirow{2}*{Level} & \multirow{2}*{$ h $} & \multirow{2}*{d.o.f.} &
		\multicolumn{4}{|c|}{ LOBPCG with preconditioners  } \\
		\cline{4-7}
		~ & ~ & ~ & --- & ILU(0) & SAIT(0.05,10) & MGVC(l,5) \\
		\hline
		$ l =  1 $ & $ \pi/4 $ & $ 276 $ &
		$ 83 $  & $ 22 $  & $ 25 $ & $ 21 $ \\
		\hline
		$ l =  2 $ & $ \pi/8 $ & $ 1,656 $ & 	
		$ 135 $  & $ 18 $  & $ 30 $ & $ 18 $ \\
		\hline
		$ l =  3 $ & $ \pi/16 $ & $ 11,184 $ & 
		$ 161 $  & $ 24 $  & $ 45 $ & $ 15 $ \\
		\hline
		$ l =  4 $ & $ \pi/32 $ & $ 81,504 $ &
		$ 257 $  & $ 61 $  & $ 96 $ & $ 14 $ \\
		\hline
		$ l =  5 $ & $ \pi/64 $ & $ 620,736 $ & 
		$ 484 $  & $ 111 $  & $ 215 $ & $ 14 $ \\
		\hline
	\end{tabular}
	
\end{table}

\subsection{The grad-div problems}

The discrete weak formulation of the grad-div source problem \eqref{graddiv_source} is:
given $ \vf\in \vL^2(\Omega) $, find $ \vu_h \in \vF_h $
such that
\begin{equation}\label{weak_discrete_graddiv}
	\begin{split}
		\left\langle \nabla \cdot \vu_h \, ,\,  \nabla \cdot \vv_h \right\rangle 
		+  \left\langle \vu_h \, ,\,  \vv_h \right\rangle 
		= \left\langle \vf \, ,\,  \vv_h \right\rangle \qq \vv_h \in \vF_h.
	\end{split}
\end{equation}
Its auxiliary weak form is:
given $ \vf \in \vL^2(\Omega) $, 
find  $ (\vsigma_h, \tilde\vu_h)\in \vE_h \times \vF_h $
such that
\begin{equation}\label{weak_discrete_graddiv_mixed}
	\begin{split}
		\left\langle \vsigma_h \, ,\,  \vtau_h \right\rangle 
		- \left\langle \tilde\vu_h \, ,\,  \nabla \times \vtau_h \right\rangle 
		&=0  \qqq  \qqq\, \vtau_h \in \vE_h,\\
		\left\langle \nabla \times \vsigma_h \, ,\,  \vv_h \right\rangle 
		+ \left\langle \nabla \cdot \tilde \vu_h \, ,\,  \nabla \cdot \vv_h \right\rangle 
		+  \left\langle \tilde\vu_h \, ,\,  \vv_h \right\rangle 
		&=  \left\langle \vf \, ,\,  \vv_h \right\rangle \;\,\qq \vv_h \in \vF_h.
	\end{split}
\end{equation}
The discrete weak form the grad-div eigenvalue problem \eqref{graddiv_eigenvalue} is:
find $ (\lambda_h,\vu_h) \in \mbR \times \vF_h $ such that
\begin{equation}\label{weak_discrete_graddiv_eig}
	\begin{split}
		\left\langle \nabla \cdot \vu_h \, ,\,  \nabla \cdot \vv_h \right\rangle 
		+ c \left\langle \vu_h \, ,\,  \vv_h \right\rangle 
		= \lambda_h \left\langle \vu_h \, ,\,  \vv_h \right\rangle \qq \vv_h \in \vF_h.
	\end{split}
\end{equation}
Its auxiliary problem is:
find $ (\lambda_h,\vu_h) \in \mbR \times \vF_h $ such that
\begin{equation}\label{weak_discrete_graddiv_eig_mixed}
	\begin{split}
		\left\langle \vsigma_h \, ,\,  \vtau_h \right\rangle 
		- \left\langle \tilde\vu_h \, ,\,  \nabla \times \vtau_h \right\rangle 
		&=0  \qqq  \qqq\qqq \, \vtau_h \in \vE_h,\\
		\left\langle \nabla \times \vsigma_h \, ,\,  \vv_h \right\rangle 
		+ \left\langle \nabla \cdot \tilde \vu_h \, ,\,  \nabla \cdot \vv_h \right\rangle  
		&=  \lambda_h \left\langle \vu_h \, ,\,  \vv_h \right\rangle \;\,\qq \vv_h \in \vF_h.
	\end{split}
\end{equation}

Most of the results in computing the grad-div problems using auxiliary schemes
are similar to those of Maxwell problems.
Figure \ref{SAIT_graddiv} is the nonzero entries in SAIT matrices on different meshes and domains.
Figure \ref{graddiv_L5} shows the comparisons of the convergences of
the original grad-div source problem and its auxiliary scheme.
The difference from Maxwell problems is that 
ILU(0) works for the original grad-div source problem,
but it doesn't improve the convergence.
Table \ref{Table_graddiv_Domain_1} and \ref{Table_graddiv_Domain_2} 
show the convergence results on the two domains with different levels of meshes, respectively.
\begin{figure}
	\centering
	\begin{minipage}{6cm}
		\includegraphics[height=5cm,width=6.5cm]{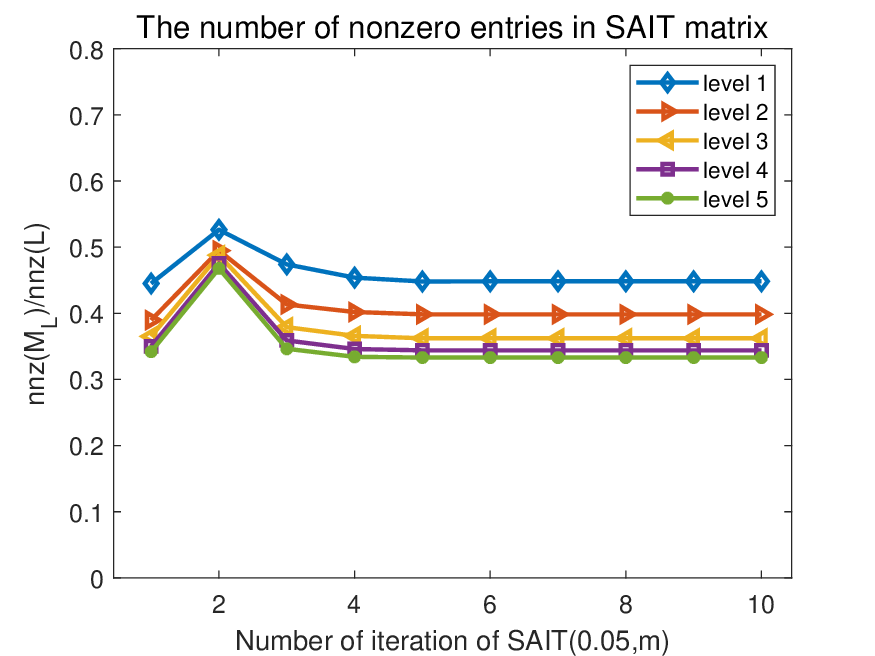}
	\end{minipage}
	\begin{minipage}{6cm}
		\includegraphics[height=5cm,width=6.5cm]{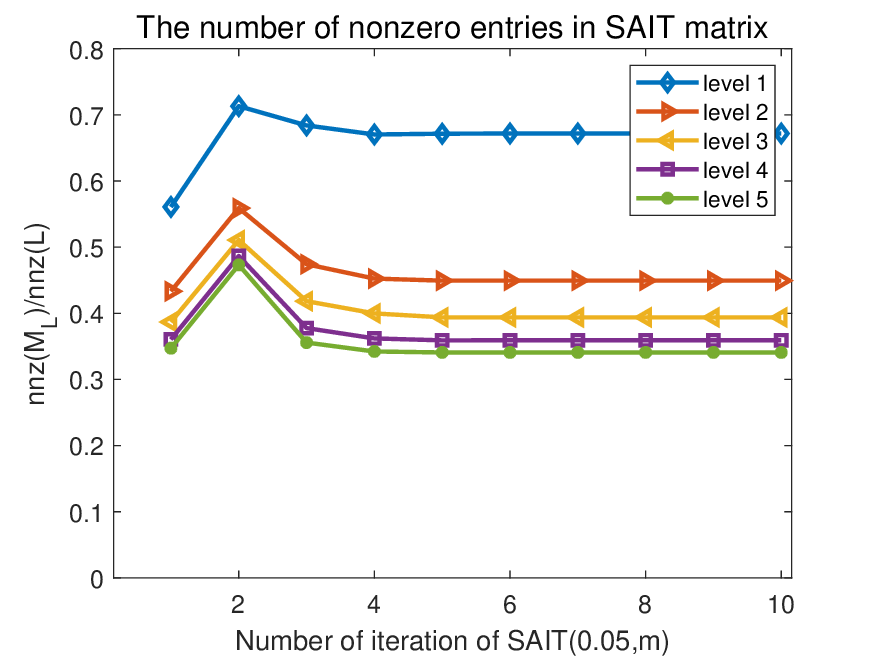}
	\end{minipage}
	\caption{The number of the nonzero entries in the SAIT for the auxiliary grad-div equation.
		Left: Domain 1. Right: Domain 2. }
	\label{SAIT_graddiv}
\end{figure}
\begin{figure}
	\centering
	\begin{minipage}{6cm}
		\includegraphics[height=5.5cm,width=6.5cm]{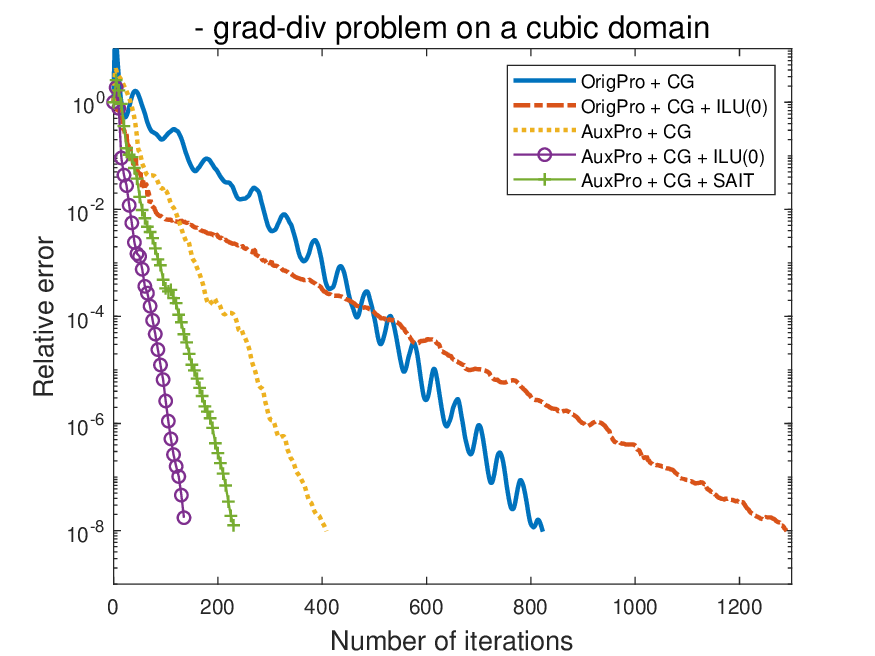}
	\end{minipage}
	\begin{minipage}{6cm}
		\includegraphics[height=5.5cm,width=6.5cm]{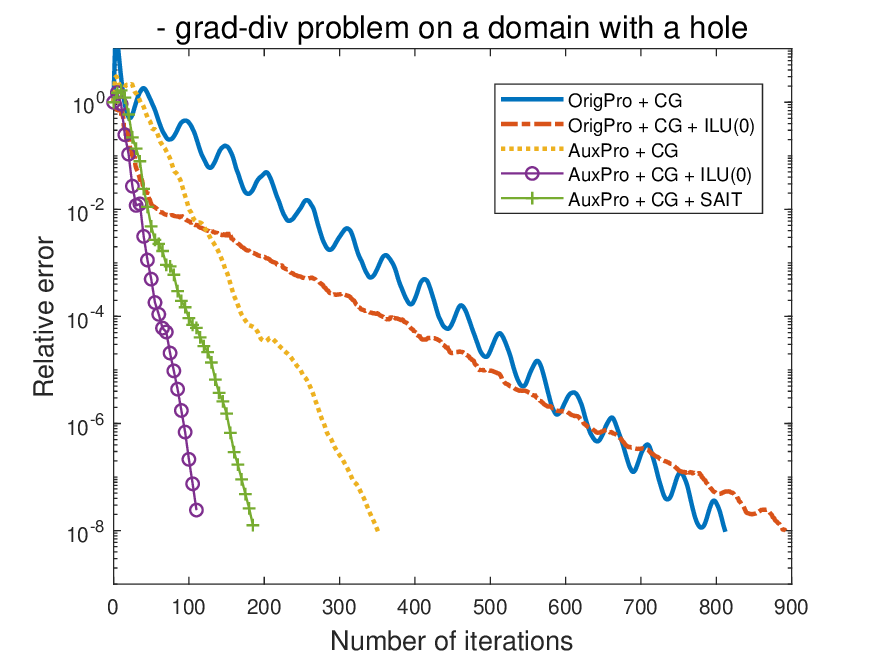}
	\end{minipage}
	\caption{The convergence histories of the original grad-div equation 
		and its auxiliary scheme with different iterative methods. 
		The mesh is level 5 with the mesh size $ h = \pi/64 $.
		Left: Domain 1. Rigth: Domain 2.}
	\label{graddiv_L5}
\end{figure}
\begin{table}[ht]  
	\caption{ The iteration counts of the original discrete grad-div equation,
		its auxiliary equation and the mass equation with different iterative methods
		and preconditioners on Domain 1.}
	\label{Table_graddiv_Domain_1}
	\footnotesize	
	\centering  
	\begin{tabular}{|c|c|c||c|c||c|c|c|c|c||c|c|}	
		\hline
		\multicolumn{3}{|c||}{ } &
		\multicolumn{2}{|c||}{ $ (\mcA + c\mcM_k)u = f $ } &
		\multicolumn{5}{|c||}{ $ (\mcA + \mcB \mcU \mcB^T + c\mcM_k) \tilde  u = f $ } &
		\multicolumn{2}{|c|}{ $ \mcM_k v =  \mcB \mcU \mcB^T \tilde u $ } \\
		\hline
		\multirow{2}*{Level} & \multirow{2}*{$ h $} & \multirow{2}*{d.o.f.} &
		\multicolumn{2}{|c||}{\tabincell{c}{Conjugate \\ Gradient}  } &
		\multicolumn{4}{|c|}{ Conjugate  Gradient } & \multirow{2}*{ \tabincell{c}{Multigrid \\ \tiny{MGVC$(l,5)$}} } &
		\multicolumn{2}{|c|}{ \tabincell{c}{Conjugate \\ Gradient}  }\\
		\cline{4-9} \cline{11-12}
		~ & ~ & ~ & $ -- $ & ILU(0) &
		$ -- $ & ILU(0) & SAIT & \tiny{MGVC$(l,5)$} & ~ & $ -- $ & ILU(0) \\
		\hline
		$ l =  1 $ & $ \pi/4 $ & $ 240 $ & $ 36 $  & $ 19 $  &
		$ 37 $ & $ 10 $ & $ 15 $ &   &   &
		$ 5 $ &  $ 1 $ \\
		\hline
		$ l =  2 $ & $ \pi/8 $ & $ 1,728 $ & $ 99 $  & $ 45 $  &
		$ 64 $ & $ 49 $ & $ 29 $ & $ 4 $ & $ 5 $ &
		$ 9 $ &  $ 1 $ \\
		\hline
		$ l =  3 $ & $ \pi/16 $ & $ 13,056 $ & $ 198 $  & $ 133 $  &
		$ 112 $ & $ 36 $ & $ 57 $ & $ 6 $ & $ 7 $ &
		$ 9 $ &  $ 1 $ \\
		\hline
		$ l =  4 $ & $ \pi/32 $ & $ 101,376 $ & $ 406 $  & $ 398 $  &
		$ 213 $ & $ 70 $ & $ 118 $ & $ 9 $ & $ 10 $ &
		$ 15 $ &  $ 1 $ \\
		\hline
		$ l =  5 $ & $ \pi/64 $ & $ 798,720 $ & $ 820 $  & $ 1297 $  &
		$ 407 $ & $ 138 $ & $ 233 $ & $ 11 $ & $ 15 $ &
		$ 15 $ &  $ 1 $ \\
		\hline
	\end{tabular}
\end{table}
\begin{table}[ht]  
	\caption{ The iteration counts of the original discrete grad-div equation,
	its auxiliary equation and the mass equation with different iterative methods
	and preconditioners on Domain 2.}
\label{Table_graddiv_Domain_2}
	\footnotesize	
	\centering  
	\begin{tabular}{|c|c|c||c|c||c|c|c|c|c||c|c|}	
		\hline
		\multicolumn{3}{|c||}{ } &
		\multicolumn{2}{|c||}{ $ (\mcA + c\mcM_k)u = f $ } &
		\multicolumn{5}{|c||}{ $ (\mcA + \mcB \mcU \mcB^T + c\mcM_k) \tilde u = f $ } &
		\multicolumn{2}{|c|}{ $ \mcM_k v =  \mcB \mcU \mcB^T \tilde u $ } \\
		\hline
		\multirow{2}*{Level} & \multirow{2}*{$ h $} & \multirow{2}*{d.o.f.} &
		\multicolumn{2}{|c||}{ \tabincell{c}{Conjugate \\ Gradient}  } &
		\multicolumn{4}{|c|}{ Conjugate Gradient  } & \multirow{2}*{ \tabincell{c}{Multigrid \\ \tiny{MGVC$(l,5)$}}} &
		\multicolumn{2}{|c|}{ \tabincell{c}{Conjugate \\ Gradient}  }\\
		\cline{4-9} \cline{11-12}
		~ & ~ & ~ & $ -- $ & ILU(0) &
		$ -- $ & ILU(0) & SAIT & MGVC & ~ & $ -- $ & ILU(0) \\
		\hline
		$ l =  1 $ & $ \pi/4 $ & $ 204 $ & $ 55 $  & $ 17 $  &
		$ 45 $ & $ 10 $ & $ 14 $ &   &   &
		$ 6 $ &  $ 1 $ \\
		\hline
		$ l =  2 $ & $ \pi/8 $ & $ 1,392 $ & $ 109 $  & $ 36 $  &
		$ 68 $ & $ 17 $ & $ 27 $ & $ 4 $ & $ 6 $ &
		$ 12 $ &  $ 1 $ \\
		\hline
		$ l =  3 $ & $ \pi/16 $ & $ 10,176 $ & $ 211 $  & $ 103 $  &
		$ 111 $ & $ 31 $ & $ 48 $ & $ 6 $ & $ 6 $ &
		$ 17 $ &  $ 1 $ \\
		\hline
		$ l =  4 $ & $ \pi/32 $ & $ 77,568 $ & $ 401 $  & $ 295 $  &
		$ 197 $ & $ 59 $ & $ 94 $ & $ 7 $ & $ 9 $ &
		$ 16 $ &  $ 1 $ \\
		\hline
		$ l =  5 $ & $ \pi/64 $ & $ 605,184 $ & $ 816 $  & $ 884 $  &
		$ 365 $ & $ 113 $ & $ 186 $ & $ 10 $ & $ 13 $ &
		$ 15 $ &  $ 1 $ \\
		\hline
	\end{tabular}
\end{table}

Table \ref{recompute_grad_div_Domain_1} and \ref{recompute_grad_div_Domain_2} show the results in 
recomputing the numerical eigenvalues using
the original discrete eigenvalue problem \eqref{weak_discrete_graddiv_eig}.
Table \ref{grad_div_eig_domain_2} and \ref{grad_div_eig_domain_1} show more convergence results
of auxiliary discrete grad-div eigenvalue problems \eqref{weak_discrete_graddiv_eig_mixed} on 
the two domains, respectively.

\begin{table}[ht]  
	\caption{ Recompute the numerical eigenvalues using 
		the original discrete grad-div operator
		and the auxiliary operator on Domain 1, respectively.}
	\label{recompute_grad_div_Domain_1}	
	\footnotesize
	\centering  
	\begin{tabular}{|c|c|c|c|c| }	
		\hline
		Exact	&  $\left( \mcA + \mcB \mcU  \mcB^T \right)  u = \lambda_h \mcM_k u$
		& $ \mcA u = \lambda_h \mcM_k u$ 
		&  $ \mcB \mcU  \mcB^T u = \lambda_h \mcM_k u$ & Type\\
		\hline	
		3	&	3.000602 &     3.000602   &   7.452 $\times 10^{-13}$ & Type 1\\
		6	&	6.003615 &     6.003615   &   4.122 $\times 10^{-13}$ & Type 1\\
		6	&	6.003615 &     6.003615   &   1.045 $\times 10^{-12}$ & Type 1\\
		6	&	6.003615 &     6.003615   &   1.128 $\times 10^{-12}$ & Type 1\\
		9	&	9.006628 &     9.006628   &   5.124 $\times 10^{-13}$ & Type 1\\
		9	&	9.006628  &    9.006628   &   8.570 $\times 10^{-13}$ & Type 1\\
		9	&	9.006628  &    9.006628   &   1.369 $\times 10^{-12}$ & Type 1\\
		&	9.539861 &     1.242 $\times 10^{-12}$  &   9.539861  & Type 2\\
		&	9.539861 &     1.989 $\times 10^{-12}$  &   9.539861  & Type 2\\
		&	9.539861  &    3.823 $\times 10^{-12}$  &   9.539861  & Type 2\\
		11	&	11.016677 &     11.016677  &    1.086 $\times 10^{-12}$ & Type 1\\
		11	&	11.016677 &     11.016677  &    1.373 $\times 10^{-12}$ & Type 1\\
		11	&	11.016677  &    11.016677  &   3.347 $\times 10^{-12}$ & Type 1\\
		12	&	12.009641  &    12.009641  &    8.625 $\times 10^{-13}$ & Type 1\\
		14	&	14.019690 &     14.019690  &     2.515 $\times 10^{-13}$ & Type 1\\
		14	&	14.019690 &     14.019690  &    9.587 $\times 10^{-13}$ & Type 1\\
		14	&	14.019690  &    14.019690  &     2.729 $\times 10^{-12}$ & Type 1\\
		14	&	14.019690  &    14.019690  &    6.097 $\times 10^{-12}$ & Type 1\\
		14	&	14.019690  &    14.019690  &     4.431 $\times 10^{-12}$ & Type 1\\
		14	&	14.019690  &    14.019690  &    1.004 $\times 10^{-13}$ & Type 1\\
		\hline		
		
	\end{tabular}

\end{table}

\begin{table}[ht]  
\caption{ Recompute the numerical eigenvalues using 
	the original discrete grad-div operator
	and the auxiliary operator on Domain 2, respectively.}
\label{recompute_grad_div_Domain_2}
	\footnotesize	
	\centering  
	\begin{tabular}{|c|c|c|c| }	
		\hline
		$\left( \mcA + \mcB \mcU  \mcB^T \right)  u = \lambda_h \mcM_k u$
		& $ \mcA u = \lambda_h \mcM_k u$ 
		&  $ \mcB \mcU  \mcB^T u = \lambda_h \mcM_k u$ & Type\\
		\hline	
		4.533215  &   4.856 $\times 10^{-12}$  &   4.533215  & Type 2\\
		6.952396  &   1.882 $\times 10^{-12}$  &   6.952396  & Type 2\\
		6.952396  &   2.652 $\times 10^{-12}$  &   6.952396  & Type 2\\
		12.455297  &   3.875 $\times 10^{-13}$  &   12.455297  & Type 2\\
		15.488922  &   15.488922   &   4.132 $\times 10^{-13}$ & Type 1\\
		15.805105  &   5.086 $\times 10^{-13}$  &   15.805105  & Type 2\\
		15.921176  &   15.921176   &   5.413 $\times 10^{-13}$ & Type 1\\
		15.921176  &   15.921176   &   4.822 $\times 10^{-13}$ & Type 1\\
		16.618025  &   16.618025   &   1.017 $\times 10^{-13}$ & Type 1\\
		18.109571  &   7.399 $\times 10^{-13}$  &   18.109571  & Type 2\\
		18.491935  &   18.491935   &   4.263 $\times 10^{-13}$ & Type 1\\
		18.924189  &   18.924189   &   3.922 $\times 10^{-13}$ & Type 1\\
		18.924189  &   18.924189   &   6.295 $\times 10^{-13}$ & Type 1\\
		19.251186  &   19.251186   &   2.779 $\times 10^{-13}$ & Type 1\\
		19.621038  &   19.621038   &   1.210 $\times 10^{-12}$ & Type 1\\
		20.546486  &   1.087 $\times 10^{-12}$  &   20.546486  & Type 2\\
		20.546486  &   2.856 $\times 10^{-12}$  &   20.546486  & Type 2\\
		20.932342  &   20.932342   &   1.935 $\times 10^{-12}$ & Type 1\\
		20.932342  &   20.932342   &   3.757 $\times 10^{-12}$ & Type 1\\
		22.254199  &   22.254199   &   1.524 $\times 10^{-09}$ & Type 1\\
		\hline		
	\end{tabular}
	
\end{table}

\begin{table}[ht]  
\caption{ The iteration counts in computing the first 20 eigenvalues 
	of the auxiliary grad-div eigenproblem using different preconditioners
	and different mesh sizes on Domain 1.}
\label{grad_div_eig_domain_1}
	\centering  
	\begin{tabular}{|c| c| c |c| c |c|c|}
		\hline	
		\multirow{2}*{Level} & \multirow{2}*{$ h $} & \multirow{2}*{d.o.f.} &
		\multicolumn{4}{|c|}{ LOBPCG with preconditioners  } \\
		\cline{4-7}
		~ & ~ & ~ & $ -- $ & ILU(0) & SAIT(0.05,10) & MGVC(l,5) \\
		\hline
		$ l =  1 $ & $ \pi/4 $ & $ 240 $ &
		$ 83 $  & $ 26 $  & $ 30 $ & $ 25 $ \\
		\hline
		$ l =  2 $ & $ \pi/8 $ & $ 1,728 $ & 	
		$ 167 $  & $ 39 $  & $ 57 $ & $ 34 $ \\
		\hline
		$ l =  3 $ & $ \pi/16 $ & $ 13,056 $ & 
		$ 175 $  & $ 42 $  & $ 67 $ & $ 18 $ \\
		\hline
		$ l =  4 $ & $ \pi/32 $ & $ 101,376 $ &
		$ 303 $  & $ 78 $  & $ 132 $ & $ 17 $ \\
		\hline
		$ l =  5 $ & $ \pi/64 $ & $ 798,720 $ & 
		$ 655 $  & $ 167 $  & $ 305 $ & $ 17 $ \\
		\hline
	\end{tabular}
	
\end{table}

\begin{table}[ht]  
\caption{ The iteration counts in computing the first 20 eigenvalues 
	of the auxiliary grad-div eigenproblem using different preconditioners
	and different mesh sizes on Domain 2.}
\label{grad_div_eig_domain_2}
	\centering  
	\begin{tabular}{|c| c| c |c| c |c|c|}
		\hline	
		\multirow{2}*{Level} & \multirow{2}*{$ h $} & \multirow{2}*{d.o.f.} &
		\multicolumn{4}{|c|}{ LOBPCG with preconditioners  } \\
		\cline{4-7}
		~ & ~ & ~ & --- & ILU(0) & SAIT(0.05,10) & MGVC(l,5) \\
		\hline
		$ l =  1 $ & $ \pi/4 $ & $ 204 $ &
		$ 75 $  & $ 23 $  & $ 25 $ & $ 22 $ \\
		\hline
		$ l =  2 $ & $ \pi/8 $ & $ 1,392 $ & 	
		$ 168 $  & $ 40 $  & $ 52 $ & $ 37 $ \\
		\hline
		$ l =  3 $ & $ \pi/16 $ & $ 10,176 $ & 
		$ 179 $  & $ 40 $  & $ 65 $ & $ 22 $ \\
		\hline
		$ l =  4 $ & $ \pi/32 $ & $ 77,568 $ &
		$ 451 $  & $ 92 $  & $ 141 $ & $ 23 $ \\
		\hline
		$ l =  5 $ & $ \pi/64 $ & $ 605,184 $ & 
		$ 739 $  & $ 187 $  & $ 269 $ & $ 22 $ \\
		\hline
	\end{tabular}
	
\end{table}

\begin{remark}
	When solving the auxiliary eigenvalue problems,
	the iteration counts of the multigrid preconditioners
	tend to a stable number with the mesh becoming finer,
	shown in Table \ref{Maxwell_eig_domain_1},
	\ref{Maxwell_eig_domain_2},
	\ref{grad_div_eig_domain_1} 
	and \ref{grad_div_eig_domain_2}.
	However, when solving the linear systems, 
	shown in Table \ref{Table_Maxwell_Domain_1}, 
	\ref{Table_Maxwell_Domain_2}, 
	\ref{Table_graddiv_Domain_1} 
	and \ref{Table_graddiv_Domain_2},
	the iteration counts of the multigrid methods
	increase with the mesh becoming finer.
	There are the similar results in solving 
	the vector Laplacian \cite{mixed_vector}.
	There may be other reasons on the increasing multigrid iterations
	in this paper.
	For example,
	the inverses of the mass matrix $ \mcM_{k-1} $
	in the matrix form of discrete Hodge Laplacian
	are replaced by diagonal matrices,
	which means that the auxiliary problems are not
	exact discrete Hodge Laplacian problems.
	Or there are some bugs contained in the author's codes.
\end{remark}

\clearpage
\section{Conclusions}\label{sec_conclusion}

We construct auxiliary iterative schemes 
for the operator $ d^*d $ using the Hodge Laplacian.
The auxiliary schemes are easy to implement
using the finite element spaces 
on the corresponding finite element complex.
If the complex is Fredholm,
the auxiliary part complements the huge kernel 
contained in the discrete operator of $ d^*d $.
Then the distributions of the spectra of 
the auxiliary problems become Laplace-like.
To make the systems sparse,
we prove that the inverse of a mass matrix
in the system of the discrete Hodge Laplacian
can be replaced by a sparse SPD matrix.
In actual computations,
it is enough to compute the sparse Hodge Laplacian problems.
Many iterative methods and preconditioning techniques
that are efficient for Laplace problems
are also efficient for the auxiliary problems
with some simple modifications.
The auxiliary schemes of the source and eigenvalue problems 
can be computed almost in the same way as Laplace problems.
After obtaining the solutions of the auxiliary problems,
the desired solutions can be recovered or identified easily.

We present the Maxwell and grad-div operators
as numerical examples to
verify the performance of the auxiliary schemes.
We use the ILU method and geometric multigrid method 
as preconditioning to solve these discrete problems.
The numerical tests are all three-dimensional
and include various cases,
source problems and eigenvalue problems,
convex and non-convex domains,
different iterative methods and preconditioners.
The convergence tendencies of the auxiliary problems
are similar to Laplace problems.
The results show the efficiency of the auxiliary schemes.

In this paper,
we mainly prove and verify that the solutions of the  $ d^*d $ problems
can be obtained through the auxiliary Hodge Laplacian problems.
The discrete Hodge Laplacian problems play the central role 
in these auxiliary schemes.
We do not do much analyses on the linear solvers and eigensolvers
for the discrete Hodge Laplacian.
The theories, algorithms and implementations in solving 
the discrete Hodge Laplacian problems
need to be studied further.

\

\

\textbf{Acknowledgements.}
The author would like to thank Prof. Yan Xu
for the funding support at 
University of Science and Technology of China
and 
thank Prof. Weiying Zheng and Prof. Shuo Zhang
at State Key Laboratory of Scientific and Engineering Computing, 
Chinese Academy of Sciences
for useful discussions


	%
	%

\bibliographystyle{plain} 
\bibliography{reference_aux} 	

\begin{thebibliography}{10}

\bibitem{arnold2018finite}
Douglas~N. Arnold.
\newblock {\em Finite element exterior calculus}.
\newblock SIAM, 2018.

\bibitem{arnold2000multigrid}
Douglas~N. Arnold, Richard~S. Falk, and Ragnar Winther.
\newblock Multigrid in {H(div) and H(curl)}.
\newblock {\em Numerische Mathematik}, 85(2):197--217, 2000.

\bibitem{arnold2006finite}
Douglas~N. Arnold, Richard~S. Falk, and Ragnar Winther.
\newblock Finite element exterior calculus, homological techniques, and
  applications.
\newblock {\em Acta Numerica}, pages 1--155, 2006.

\bibitem{arnold2010finite}
Douglas~N. Arnold, Richard~S. Falk, and Ragnar Winther.
\newblock Finite element exterior calculus: from hodge theory to numerical
  stability.
\newblock {\em Bulletin of the American mathematical society}, 47(2):281--354,
  2010.

\bibitem{template_eigen}
Zhaojun Bai, James Demmel, Jack Dongarra, Axel Ruhe, and Henk van~der Vorst.
\newblock {\em {Templates for the Solution of Algebraic Eigenvalue Problems: A
  Practical Guide (Software, Environments and Tools)}}.
\newblock SIAM, 2000.

\bibitem{boffi2010finite}
Daniele Boffi.
\newblock Finite element approximation of eigenvalue problems.
\newblock {\em Acta Numerica}, 19:1--120, 2010.

\bibitem{boffi2013mixed}
Daniele Boffi, Franco Brezzi, and Michel Fortin.
\newblock {\em {Mixed Finite Element Methods and Applications}}, volume~44.
\newblock Springer, 2013.

\bibitem{boffi1997convergence}
Daniele Boffi, Franco Brezzi, and Lucia Gastaldi.
\newblock On the convergence of eigenvalues for mixed formulations.
\newblock {\em Annali della Scuola Normale Superiore di Pisa-Classe di
  Scienze}, 25(1-2):131--154, 1997.

\bibitem{boffi2000problem}
Daniele Boffi, Franco Brezzi, and Lucia Gastaldi.
\newblock On the problem of spurious eigenvalues in the approximation of linear
  elliptic problems in mixed form.
\newblock {\em Mathematics of Computation}, 69(229):121--140, 2000.

\bibitem{IterILU}
Daniele Boffi, Zhongjie Lu, and Luca~F. Pavarino.
\newblock Iterative {ILU} preconditioners for linear systems and eigenproblems.
\newblock {\em J. Comput. Math.}, 39(4):633--654, 2021.

\bibitem{MR2373954}
Susanne~C. Brenner and L.~Ridgway Scott.
\newblock {\em The mathematical theory of finite element methods}, volume~15 of
  {\em Texts in Applied Mathematics}.
\newblock Springer, New York, third edition, 2008.

\bibitem{mixed_vector}
Long Chen, Yongke Wu, Lin Zhong, and Jie Zhou.
\newblock Multi{G}rid preconditioners for mixed finite element methods of the
  vector {L}aplacian.
\newblock {\em J. Sci. Comput.}, 77(1):101--128, 2018.

\bibitem{greenbaum1997iterative}
Anne Greenbaum.
\newblock {\em Iterative methods for solving linear systems}, volume~17.
\newblock SIAM, 1997.

\bibitem{hiptmair1998multigrid}
Ralf Hiptmair.
\newblock {Multigrid method for Maxwell's equations}.
\newblock {\em SIAM Journal on Numerical Analysis}, 36(1):204--225, 1998.

\bibitem{MR2009375}
Ralf Hiptmair.
\newblock Finite elements in computational electromagnetism.
\newblock {\em Acta Numerica}, 11:237--339, 2002.

\bibitem{hiptmair2002multilevel}
Ralf Hiptmair and Klaus Neymeyr.
\newblock Multilevel method for mixed eigenproblems.
\newblock {\em SIAM Journal on Scientific Computing}, 23(6):2141--2164, 2002.

\bibitem{hiptmair2007nodal}
Ralf Hiptmair and Jinchao Xu.
\newblock Nodal auxiliary space preconditioning in {H(curl) and H(div)} spaces.
\newblock {\em SIAM Journal on Numerical Analysis}, 45(6):2483--2509, 2007.

\bibitem{Kikuchi1987}
Fumio Kikuchi.
\newblock {Mixed and penalty formulations for finite element analysis of an
  eigenvalue problem in electromagnetism}.
\newblock {\em Computer Methods in Applied Mechanics and Engineering},
  64(1):509--521, 1987.

\bibitem{Kikuchi1989}
Fumio Kikuchi.
\newblock {On a discrete compactness property for the {N{\'e}d{\'e}lec} finite
  elements}.
\newblock {\em Journal of The Faculty of Science, The University of Tokyo,
  Section IA, Mathematics}, 36(3):479--490, 1989.

\bibitem{knyazev2001toward}
Andrew~V. Knyazev.
\newblock {Toward the optimal preconditioned eigensolver: Locally optimal block
  preconditioned conjugate gradient method}.
\newblock {\em SIAM Journal on Scientific Computing}, 23(2):517--541, 2001.

\bibitem{SAIT}
Zhongjie Lu.
\newblock A sparse approximate inverse for triangular matrices based on
  {J}acobi iteration.
\newblock {\em submitted https://arxiv.org/abs/2106.12836}.

\bibitem{nedelec1980mixed}
Jean-Claude N{\'e}d{\'e}lec.
\newblock {Mixed finite elements in $ \mathbb{R}^3 $}.
\newblock {\em Numerische Mathematik}, 35(3):315--341, 1980.

\bibitem{MR1990645}
Yousef Saad.
\newblock {\em Iterative methods for sparse linear systems}.
\newblock Society for Industrial and Applied Mathematics, Philadelphia, PA,
  second edition, 2003.

\end{thebibliography}
	
\end{document}